\newtheorem{theorem}{Theorem}[section]
\newtheorem{lemma}[theorem]{Lemma}
\newtheorem{proposition}[theorem]{Proposition}
\theoremstyle{definition}
\theoremstyle{remark}
\newtheorem*{remark}{Remark}
\newcommand{\Q}{\mathbb{Q}}
\newcommand{\R}{\mathbb{R}}
\newcommand{\C}{\mathbb{C}}
\newcommand{\Z}{\mathbb{Z}}
\newcommand{\h}{\mathbb{H}}
\newcommand{\N}{\mathbb{N}}
\newcommand{\SL}{\text{SL}}
\newcommand{\tr}{\text{tr}}
\newcommand{\disc}{\text{disc}}
\newcommand{\GL}{\text{GL}}
\title{Certain Siegel Cusp Forms with Level are Determined by their Fundamental Fourier Coefficients}
\author{Sidney Washburn\footnote{This research was partially supported by NSF grant DMS-2039316} }
\date{}
\begin{document}

\maketitle 

\begin{abstract}
    We prove that vector-valued Siegel cusp forms for $\Gamma_0^n(N)$ with certain nebentypus are determined by their fundamental Fourier coefficients with discriminants coprime to the level $N$, assuming $N$ is odd and square-free. In the case of genus $3$, we strengthen this to Fourier coefficients corresponding to maximal orders in quaternion algebras. We also prove that Jacobi forms of fundamental index with discriminant coprime to the odd level $N$ are determined by their primitive theta components. 
\end{abstract}

\section{Introduction}

\subsection{Overview}

Nonvanishing results for Fourier coefficients of Siegel modular forms have seen many interesting applications in recent years. One such application is with regards to $L$-functions attached to Siegel cusp forms. Pollack proved in \cite{Pollack} that, assuming the existence of a nonzero Fourier coefficient corresponding to a maximal order in a quaternion algebra, the Spin $L$-function  of a genus $3$, level $1$ Siegel cusp form has a functional equation and meromorphic continuation. The nonvanishing of such a Fourier coefficient in level $1$ was established by B\"ocherer and Das in \cite{BD}, rendering Pollack's result unconditional. \\

This paper makes progress towards the analogous question in level greater than $1$. Precisely, what conditions on the level and the cusp form do we need to guarantee the existence of a nonzero Fourier coefficient corresponding to a maximal order in a quaternion algebra? As an application of this statement in higher level, Eischen, Rosso, and Shah in \cite{eischen} proved the algebraicity of special values of the Spin $L$-function for genus $3$ Siegel cusp forms of arbitrary level. Their work is conditional on the aforementioned nonvanishing condition. To what extent can we make their algebraicity result unconditional? \\

Our main result in this paper (see Theorem \ref{3.1} for the precise statement) is that with assumptions on the nebentypus and level, Siegel cusp forms are determined by their \textit{fundamental} Fourier coefficients with discriminants corpime to the level. Fundamental Fourier coefficients do not necessarily correspond to maximal orders, but they are still of great importance. As an example, the nonvanishing of fundamental Fourier coefficients has implications for the existence of Bessel models (see Remark 1.1 in \cite{Saha}). B\"ocherer and Das (\cite{BD}) proved that Siegel cusp forms are determined by their fundamental Fourier coefficients in level $1$. Anamby (\cite{anamby}) extended their result to arbitrary level when the genus is $2$. This paper is a first step towards answering the analogous question for higher level in arbitrary genus.  \\

\subsection{Proof Strategy and Difficulties}

We give an overview of the proof. The strategy is the same as in \cite{BD}. However, we ran into a couple difficulties employing their method, which we explain in the paragraphs proceeding. The argument is an induction on the genus $n$. Let $F$ be a nonzero genus $n \geq 2$ Siegel cusp form (satisfying the conditions of Theorem \ref{3.1}). The overview is as follows. Via the Taylor development, we construct a genus $n-1$ Siegel cusp form, $G$, whose Fourier coefficients are essentially the Fourier--Jacobi coefficients of $F$. We apply the inductive hypothesis to $G$ so as to acquire a nonzero Fourier--Jacobi coefficient, $\varphi$, with desirable properties. With the help of Theorem \ref{3.3}, we construct a nonzero "twisted" Eichler--Zagier type map, called $h_\epsilon$ (See Propositions \ref{twisted EZ} and \ref{nonvanishing of twist}). $h_\epsilon$ is an elliptic cusp form. From here, we are able to transfer nonvanishing results from the elliptic setting (for $h_\epsilon$) to nonvanishing results of the original Siegel setting (for $F$) and complete the proof. \\

The difficulties arising for us in the situation of higher level are twofold. First, the Fourier--Jacobi coefficients of fundamental index are \textit{not} enough to establish a correspondence between the fundamental Fourier coefficients of a Siegel cusp form and the odd square-free Fourier coefficients of an elliptic cusp form. In our case, we need a Fourier--Jacobi coefficient which not only has fundamental index, but whose index \textit{also} has discriminant coprime to the level. This extra condition is trivial in the level $1$ setting worked out in \cite{BD} and led us to considering twists of the Eichler--Zagier map (see \cite{das_anamby}, \cite{Saha}, \cite{EZ}, \cite{Skoruppa}), whose modularity properties are better suited for working with the level $N$. See Propositions \ref{twisted EZ} and \ref{nonvanishing of twist}. \\

The second difficulty arises with regards to the question of nonvanishing for some twist of the Eichler--Zagier map. Indeed, at least half of the twists $h_\epsilon$ will vanish simply due to parity and weight considerations. To find nonvanishing twists, one must first restrict attention to the class of twists where only the "primitive" components contribute (See Section \ref{The Eichler--Zagier Map}). In this class of twists, we are able to use linear algebra to argue nonvanishing. We note here that this approach \textit{only} works in this subclass of twists. \\

As a last remark, we discuss the problem which originally inspired this work: the nonvanishing of a Fourier coefficient corresponding to a maximal order in a quaternion algebra. Let $F = \sum_T a_F(T) q^T$ be the Fourier expansion of a genus $3$ Siegel cusp form $F$. A Fourier coefficient $a_F(T)$ corresponds to a maximal order in a quaternion algebra when the discriminant of $T$ is an odd prime (\cite{BD}). We succeeded in proving this (See Theorem \ref{3.2}). However, our result does \textit{not} directly apply to the algebraicity results proved in \cite{eischen}. Indeed, the authors there work with the congruence subgroup $\Gamma^0(N)$, while we work with $\Gamma_0(N)$. There is likely an analogous result for $\Gamma^0(N)$, but we have not pursued this yet. \\

\subsection{Further Questions}

A natural question that arises here is the necessity of the assumption on the nebentypus in Theorem \ref{3.1}. We suspect that any assumption on the nebentypus is unnecessary and that it should suffice for the Siegel cusp form to simply be a newform. Indeed, in genus $2$, this is true (See \cite{anamby}). The obstacle one faces in higher genus lies in the induction argument: the construction of genus $n-1$ cusp forms from genus $n$ cusp forms via the Taylor development. Even if one starts with a newform, it is not clear if the resulting genus $n-1$ form is also a newform or has a nonzero newform component. In genus $2$, there are more tools available (unavailable genus $> 2$) that allows one to avoid the inductive argument. Our assumption on the nebentypus forces everything to be new, allowing us to sidestep this issue. \\

As a final remark, we discuss the following question: Given a Siegel cusp form $F$, for which fundamental $T \in \Lambda_n^+$ is the corresponding Fourier coefficient nonzero? First off, both Theorems \ref{3.1} and \ref{3.2} can made quantitative, though we did not write this down. Secondly, while our results say nothing about which $T$ admit nonvanishing Fourier coefficients, one does know that if $F$ is ordinary at $p$ and $a_F(T) \neq 0$ for some given $T$, then $a_G(T) \neq 0$ for any ordinary Siegel cusp form $G$ which lives in a sufficiently small $p$-adic neighborhood of $F$. See \cite{andreatta} for more of this $p$-adic story.

\subsection{Structure of the Paper}

The outline of the paper is as follows. In Section \ref{Notation}, we explain and write down the basic notation and prerequisites needed to understand the proofs and results. In Section \ref{Main results}, we state the main theorems to be proven. In Section \ref{Theorem 3.3 proof}, we prove Theorem \ref{3.3}, whose statement is essential for the nonvanishing of a twist of the Eichler--Zagier map. The proof combines calculations of B\"ocherer and Das \cite{BD} and Anamby \cite{anamby}. Several propositions and lemmas are then collected in Section \ref{props and lemmas}. These intermediary results, form the bridge between the Fourier--Jacobi coefficients and the elliptic setting in the proof of Theorem \ref{3.1}. It is here that we prove the injectivity of a twist of the Eichler--Zagier map. Finally, we conclude with the proofs of Theorems \ref{3.1} and \ref{3.2} in Section \ref{proof of thm 3.1}. 

\subsection{Acknowledgments}

The author thanks his advisor, Ellen Eischen. Frequent conversations, clarifications, and suggestions from her were essential for the continued progress and eventual completion of this paper. The author would also like to thank Soumya Das and Pramath Anamby for helpful questions and clarifications. 

\section{Notation and Preliminaries} \label{Notation}

\subsection{Basic Notation}

Throughout, we use $\exp(\bullet) := e^{2\pi i \cdot \tr(\bullet)}$. The transpose of a vector or matrix $x$ is denoted by $^tx$. For a ring $R$, we will write $R^{(m,n)}$ to denote the set of $m \times n$ matrices whose entries are elements of $R$. For $n \times 1$ column vectors, we will simply write $R^n := R^{(n,1)}$. We will always take the principal square root defined on arguments in $(-\pi,\pi)$, so that $\sqrt{\bullet}$ only returns complex numbers with postive real part and is holomorphic away from the negative real axis. \\

We write $\Lambda_n$ (resp. $\Lambda_n^+)$ to denote the set of $n \times n$ half-integral, symmetric, positive semi-definite (resp. positive definite) matrices. A matrix is half-integral if all of the entries are half-integers, except for the diagonal, whose entries are integers. Equivalently, a symmetric matrix $T$ is half-integral if the quadratic form corresponding to $2T$ represents only even numbers. For $T \in \Lambda_n$ and $x \in \Z^n$, we write $T[x] := \ ^txTx$. \\

The discriminant of $T \in \Lambda_n$ is the quantity 

\[
\disc(T) = d_T := \begin{cases}
    \det(2T) & n \ \text{is even} \\
    \frac{1}{2} \det(2T) & n \ \text{is odd}
\end{cases}
\]

$T$ is called fundamental if $d_T$ is odd and square-free. The \textit{level} of $T$, denoted $\text{lvl}(T)$, is the minimal integer $\ell$ so that $\ell \cdot (2T)^{-1}$ is even. If $T$ is fundamental, then $\mathrm{lvl}(T)$ is explicitly given in terms of the discriminant (See Lemma 2.2 in \cite{BD}): 

\begin{equation} \label{level of T}
    \mathrm{lvl}(T) = \begin{cases}
        d_T & n \ \mathrm{even} \\
        4d_T & n \ \mathrm{odd}
    \end{cases}
\end{equation}

For a Dirichlet character $\chi$ modulo $N$, we will write $\mathfrak{f}_\chi$ to denote the conductor of $\chi$. \\

Frequently, we will use a block decomposition of an $n \times n$ matrix into an upper left $1 \times 1$ block and a lower right $(n-1) \times (n-1)$ block. Unless stated otherwise, we will use fraktur letters to denote the lower right $(n-1) \times (n-1)$ block. e.g.

\[
Z = \begin{pmatrix}
    \ast & \ast \\ \ast & \mathfrak{Z}
\end{pmatrix}, \ T = \begin{pmatrix}
    * & * \\ * & \mathfrak{T}
\end{pmatrix}
\]

\subsection{Siegel Modular Forms}

Fix an $m$-dimensional complex vector space $V$ and let $\rho:\GL_n(\C) \to \GL(V)$ be a polynomial representation on $V$. For such a $\rho$, there exists a maximal integer $k = k(\rho)$ so that $\det^{-k} \otimes \rho$ is polynomial. $k(\rho)$ is called the \textit{determinantal weight} of $\rho$. Let $N > 0$ be an integer, $\chi$ a Dirichlet character mod $N$. The Siegel upper half-space, $\h_n$, is the collection of symmetric complex matrices having positive definite imaginary part:

\[
\h_n := \{Z = X+iY = \ ^tZ \ : \ X,Y \in \R^{(n,n)}, \ Y > 0\}
\]

When $n = 1$, this is the usual upper half-space in $\C$ and we omit the $n$ from the notation (i.e. $\h := \h_1$). \\

A holomorphic function $F:\h_n \to V$ is called a Siegel modular form of genus $n$, automorphy factor $\rho$, with respect to a congruence subgroup $\Gamma \subset \mathrm{Sp_{2n}(\Z)}$ if

\[
F \vert_\rho g = F \ \forall g \in \Gamma.
\]

Here $g = \begin{pmatrix} A & B \\ C & D \end{pmatrix}$ is a block deccomposition of $g$ into $n \times n$ blocks and $F \vert_g$ is the "slash" operator defined as

\[
(F \vert_\rho g)(Z) := \rho(CZ+D)^{-1} F(g \langle Z \rangle)
\]

with $g \langle Z \rangle := (AZ+B)(CZ+D)^{-1}$. When $n = 1$, we require the usual condition of holomorphicity at the cusps. There are three congruence subgroups we frequently consider throughout this paper: 

\[
\Gamma^n(N) := \left\{ \begin{pmatrix} A & B \\ C & D \end{pmatrix} \in \mathrm{Sp_{2n}(\Z)} \ : \ \begin{pmatrix} A & B \\ C & D \end{pmatrix} \equiv \begin{pmatrix} 1 & 0 \\ 0 & 1 \end{pmatrix} \ \mathrm{mod} \ N \right\}
\]

\[
\Gamma_1^n(N) := \left\{ \begin{pmatrix} A & B \\ C & D \end{pmatrix} \in \mathrm{Sp_{2n}(\Z)} \ : \ \begin{pmatrix} A & B \\ C & D \end{pmatrix} \equiv \begin{pmatrix} 1 & \ast \\ 0 & 1 \end{pmatrix} \ \mathrm{mod} \ N \right\}
\]

\[
\Gamma_0^n(N) := \left\{ \begin{pmatrix} A & B \\ C & D \end{pmatrix} \in \mathrm{Sp_{2n}(\Z)} \ : \ \begin{pmatrix} A & B \\ C & D \end{pmatrix} \equiv \begin{pmatrix} \ast & \ast \\ 0 & \ast \end{pmatrix} \ \mathrm{mod} \ N \right\}
\]

When $n = 1$, we omit it from the notation, e.g. we write $\Gamma_0(N) := \Gamma_0^1(N)$. \\

We say a Siegel modular form $F$ for $\Gamma_1^n(N)$ has nebentypus $\chi$ if $F$ satisfies the further condition:

\[
F \vert_g g = \chi(\det(D)) \cdot F \ \ \forall \ g \in \Gamma_0^n(N)
\]

Siegel modular forms have Fourier expansions of the form 

\[
F(Z) = \sum_{T \in \Lambda_n} a_F(T) q^T,
\]

where $q^T := \exp(TZ)$ and $a_F(T) \in \C$. $F$ is called a cusp form when $a_F(T) \neq 0$ implies $T > 0$. We write $S_\rho(\Gamma_0^n(N), \chi)$ to denote the space of Siegel cusp forms with automorphy factor $\rho$ and nebentypus $\chi$. \\

\subsection{Half-integer Weight Modular Forms}

Let $k$ be an odd integer, $N$ a positive integer divisible by $4$. Let $\chi$ be Dirichlet character mod $N$. A modular form of weight $k/2$ with respect to $\Gamma_0(N)$ and nebentypus $\chi$ is a holomorphic function $f:\h \to \C$ satisfying \\

\begin{enumerate}
    \item $f \vert_{k/2} g = \chi(d) \cdot f \ \ \forall \ g \in \Gamma_0(N)$.
    \item $f$ is holomorphic at every cusp. (see \cite{koblitz})
\end{enumerate}

Here, the "slash" operator is defined as follows

\[
f \vert_{k/2}g = j(g,\tau)^{-k} f(g \langle \tau \rangle),
\]

where $j(g,\tau) := \left( \frac{c}{d} \right) \cdot \epsilon_d^{-1} \sqrt{c\tau+d}$. $\left( \frac{c}{d} \right)$ is the (extended) Legendre symbol and $\epsilon_d = 1$ if $d \equiv 1$ mod $4$ and $\epsilon_d = i$ if $d \equiv -1$ mod $4$. \\

$f$ is called \textit{cuspidal} if $f$ vanishes at every cusp. The space of cusp forms of weight $k/2$ with respect to $\Gamma_0(N)$ and nebentypus $\chi$ is denoted $S_{k/2}(\Gamma_0(N), \chi)$. Just as in the integer weight case, $f$ has a Fourier expansion 

\[
f(\tau) = \sum_{n} a_f(n) q^n.
\]

\subsection{Jacobi Forms}

A (scalar-valued) Jacobi form of weight $k$, index $T \in \Lambda_n$, nebentypus $\chi$ mod $N$, for $\Gamma_0(N)$ is a holomorphic function $\phi:\h \times \C^{(1,n)} \to \C$ satisfying the following transformation properties:

\begin{enumerate}
    \item $\phi \vert_{k,T} g = \chi(d) \cdot \phi$ for every $g \in \begin{pmatrix} a & b \\ c & d \end{pmatrix} \in \Gamma_0(N)$
    \item $\phi(\tau,z+x\tau+y) = \exp(-T( \ ^tx\tau x+2 \ ^txz))\phi(\tau,z)$ for every $x,y \in \Z^{(1,n)}$. \label{2}
    \item For every $g = \begin{pmatrix} a & b \\ c & d \end{pmatrix} \in \text{SL}_2(\mathbb{Z})$, there exists a Fourier expansion 
    
    \[
    \phi \vert_{k,T} g = \sum_{n \geq 0} \sum_{r \in \Z^{(1,n)}} c(n,r) q^n\zeta^r
    \]

    and $c(n,r) \neq 0$ implies $\begin{pmatrix}
        n & r/2 \\ ^tr/2 & T
    \end{pmatrix} \geq 0$
\end{enumerate}

Here, $q^n := \exp(n\tau)$, $\zeta^r := \exp(^trz)$ and the slash operator $\phi \vert_{k,T} g$ is defined as follows:

\[(\phi \vert_{k,T} g)(\tau,z) := \phi \left(\frac{a\tau + b}{c\tau+d}, \frac{z}{c\tau+d}\right)(c\tau+d)^{-k} \exp\left(\frac{-Tc\ ^tzz}{c\tau+d}\right).
\]

A Jacobi form $\phi$ as above is cuspidal if, at every cusp, we have that $c(n,r) \neq 0$ implies $\begin{pmatrix}
    n & r/2 \\ ^tr/2 & T
\end{pmatrix} > 0$. We write $J_{k,T}(N,\chi)$ to denote the collection of Jacobi forms (not necessarily cuspidal) of weight $k$, index $T$, nebentypus $\chi$ and level $N$. Property $\ref{2}$ leads to the well-known "theta decomposition" of $\phi$:

\[
\phi(\tau,z) = \sum_{\mu \in \Z^n/2T\Z^n} h_\mu(\tau) \Theta_{\mu,T}(\tau,z)
\]

where $\displaystyle \Theta_{\mu,T}(\tau,z) := \sum_{\ell \in \Z^n} \exp(T(\ ^t(\ell + \tilde{\mu})(\ell + \tilde{\mu}) \tau + 2 \ ^t(\ell + \tilde{\mu})z))$ and $\tilde{\mu} := (2T)^{-1}\mu$. The $h_\mu(\tau)$ are called the \textit{theta components} of $\phi$ are are modular forms of weight $k - n/2$ with respect to a certain congruence subgroup. $h_\mu$ has a Fourier expansion of the form:

\[
h_\mu(\tau) = \sum_{\ell \geq 0} c\left(\ell, \ ^t\mu \right)q^{\ell - T^{-1}[\mu/2]}
\]

The modularity properties of the theta components $h_\mu$ are partially derived from the transformation laws of the theta series $\Theta_\mu$, which we record here:

\begin{equation}
    \Theta_\mu \vert_{n/2,T} \begin{pmatrix}
        1 & 1 \\ 0 & 1
    \end{pmatrix} = \exp((2T)^{-1}[\mu])\Theta_\mu \label{Theta T}
\end{equation} 

and

\begin{equation}
        \Theta_\mu \vert_{n/2,T} \begin{pmatrix}
        0 & -1 \\ 1 & 0 
    \end{pmatrix} = \det(2T)^{-1/2} i^{-n/2} \sum_{\nu \in \Z^n/2T\Z^n} \exp(-^t\nu(2T)^{-1}\mu) \Theta_\nu \label{Theta S}
\end{equation} 

(\ref{Theta T}) is immediate from the definition of $\Theta_{\mu,T}$. (\ref{Theta S}) is a consequence of the Poisson summation formula and can be found in various references, e.g. \cite{ziegler}.

\subsection{The Fourier--Jacobi Expansion}

Siegel modular forms and Jacobi forms are naturally connected via the \textit{Fourier -- Jacobi expansion}. Assume $n > 1$ and let $F \in S_\rho(\Gamma_0^n(N),\chi)$ be a Siegel cusp form with Fourier coefficients $a_F(T)$. Break the input variable $Z \in \h_n$ and the indexing half-integral matrix $T$ into $1\times 1$ and $(n-1) \times (n-1)$ block matrices as follows:

\[
Z = \begin{pmatrix}
    \tau & z \\ ^tz & \mathfrak{Z}
\end{pmatrix}, T = \begin{pmatrix}
    t & r/2 \\ ^tr/2 & \mathfrak{T}
\end{pmatrix}
\]

Here $\tau \in \h$, $\mathfrak{Z} \in \h_{n-1}$, $z \in \C^{(1,n-1)}, t \in \Z_{>0}, r\in \mathbb{Z}^{(1,n-1)},\mathfrak{T}\in \Lambda_{n-1}$. The Fourier--Jacobi expansion of $F$ is the Fourier expansion of $F$ in the variable $\mathfrak{Z}$:

\[
F(Z) = F(\tau,z,\mathfrak{Z}) = \sum_{\mathfrak{T} \in \Lambda_{n-1}} \varphi_{\mathfrak{T}}(\tau,z) q^{\mathfrak{T}}
\]

The Fourier coefficients, now functions $\h \times \C^{(1,n)} \to V$, are called the \textit{Fourier--Jacobi coefficients} of $F$. Crucial for us is the Fourier expansion of $\varphi_\mathfrak{T}$:

\[
\varphi_\mathfrak{T}(\tau,z) = \sum_{n \geq 0} \sum_{r \in \Z^{(1,n)}} a_F\begin{pmatrix}
    n & r/2 \\ ^tr/2 & \mathfrak{T}
\end{pmatrix} q^n \zeta^t
\]

When $\varphi_\mathfrak{T}$ is scalar-valued, the theta components $h_\mu$ of $\varphi_\mathfrak{T}$ have the following Fourier expansions

\[
h_\mu(\tau) = \sum_{n \geq 0} a_F\begin{pmatrix}
    n & ^t\mu/2 \\ \mu/2 & \mathfrak{T}
\end{pmatrix} q^{n - T^{-1} [\mu/2]}
\]

When $F$ is scalar-valued, the $\varphi_\mathfrak{T}$ are also scalar valued. In the vector-valued case, the following proposition will be repeatedly used throughout the paper to reduce ourselves to a scalar-valued Jacobi form: \\

\begin{proposition} (Proposition 3.4 in \cite{BD}) \label{reduce to scalar}
    Let $F \in S_\rho(\Gamma_0^n(N),\chi)$ be a nonzero Siegel modular form. If $\varphi_\mathfrak{T}$ is a nonzero Fourier--Jacobi coefficient of $F$, then there exists a nonzero scalar-valued component of $\varphi_\mathfrak{T}$ which is a scalar-valued Jacobi form of level $\Gamma_0(N)$, index $\mathfrak{T}$, nebentypus $\chi$, and weight $k' \geq k(\rho)$.  
\end{proposition}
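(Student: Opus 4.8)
The plan is to isolate, within the weight decomposition of $V$, the graded piece on which the Jacobi-group automorphy factor acts purely by a scalar, and then to show that the corresponding component of $\varphi_\mathfrak{T}$ is nonzero once the index is chosen minimally. Decompose $V=\bigoplus_{j}V_j$ according to the weight of the one-parameter subgroup $t\mapsto\mathrm{diag}(t,1,\dots,1)$ of $\GL_n(\C)$; since $\det^{-k(\rho)}\otimes\rho$ is polynomial and $k(\rho)\ge 0$, we have $j\ge k(\rho)\ge 0$ whenever $V_j\ne 0$. Write $\mathrm{pr}_j\colon V\to V_j$ for the projections. The first step is to record the two relevant transformation laws of $\varphi_\mathfrak{T}$. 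Embedding $\SL_2(\Z)$ into $\symp_{2n}(\Z)$ through the first variable $\tau$, an element $\gamma=\left(\begin{smallmatrix}a&b\\c&d\end{smallmatrix}\right)\in\Gamma_0(N)$ yields an element $\tilde\gamma\in\Gamma_0^n(N)$ with $\det D=d$ and
\[
CZ+D=\begin{pmatrix}c\tau+d&cz\\0&1_{n-1}\end{pmatrix};
\]
comparing the Fourier coefficients in $\mathfrak{Z}$ of the identity $F\vert_\rho\tilde\gamma=\chi(d)\,F$ then gives
\[
\varphi_\mathfrak{T}\!\left(\tfrac{a\tau+b}{c\tau+d},\tfrac{z}{c\tau+d}\right)=\chi(d)\,\exp\!\left(\tfrac{c\,\mathfrak{T}\,{}^tz z}{c\tau+d}\right)\rho(CZ+D)\,\varphi_\mathfrak{T}(\tau,z),
\]
while the Heisenberg elements --- which have vanishing lower-left block, hence lie in $\Gamma_0^n(N)$ with trivial nebentypus --- give, by the same extraction of Fourier coefficients, the theta transformation property in the definition of a Jacobi form of index $\mathfrak{T}$ for $\varphi_\mathfrak{T}$.

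The key observation is the factorisation
\[
CZ+D=\mathrm{diag}(c\tau+d,1,\dots,1)\cdot\begin{pmatrix}1&cz/(c\tau+d)\\0&1_{n-1}\end{pmatrix},
\]
in which, under $\rho$, the diagonal factor acts on $V_{j'}$ by the scalar $(c\tau+d)^{j'}$ while the unipotent factor \emph{raises} the grading: $\rho\!\left(\begin{smallmatrix}1&u\\0&1_{n-1}\end{smallmatrix}\right)$ maps $V_j$ into $\bigoplus_{j'\ge j}V_{j'}$ and induces the identity on $V_j$, as one sees from $[\mathrm{diag}(1,0,\dots,0),\left(\begin{smallmatrix}0&u\\0&0\end{smallmatrix}\right)]=\left(\begin{smallmatrix}0&u\\0&0\end{smallmatrix}\right)$. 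Now let $k'$ be the least integer with $\psi:=\mathrm{pr}_{k'}(\varphi_\mathfrak{T})\not\equiv 0$; it exists because $\varphi_\mathfrak{T}\ne 0$, and $k'\ge k(\rho)$ because $V_{k'}\ne 0$. Applying $\mathrm{pr}_{k'}$ to the transformation law above: since $\rho$ of the unipotent factor sends each $\mathrm{pr}_j(\varphi_\mathfrak{T})$ into $\bigoplus_{j'\ge j}V_{j'}$, only the $j=k'$ summand survives the projection, and there $\mathrm{pr}_{k'}$ recovers $\psi$ while the diagonal factor contributes the scalar $(c\tau+d)^{k'}$; hence
\[
\psi\!\left(\tfrac{a\tau+b}{c\tau+d},\tfrac{z}{c\tau+d}\right)=\chi(d)\,(c\tau+d)^{k'}\exp\!\left(\tfrac{c\,\mathfrak{T}\,{}^tz z}{c\tau+d}\right)\psi(\tau,z).
\]
So $\psi$ obeys the transformation law of a weight-$k'$, index-$\mathfrak{T}$, nebentypus-$\chi$ Jacobi form for $\Gamma_0(N)$, now with purely scalar automorphy factor, and likewise it obeys the theta transformation property since $\mathrm{pr}_{k'}$ again recovers $\psi$ from the unipotents involved there. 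Because $\psi\not\equiv 0$, choose a basis of $V_{k'}$ in which some coordinate of $\psi$ is a nonzero function, and let $\lambda\colon V\to\C$ be that coordinate functional, extended by $0$ on $\bigoplus_{j\ne k'}V_j$; then $\lambda\circ\varphi_\mathfrak{T}=\lambda\circ\psi\not\equiv 0$ is scalar-valued and inherits both transformation properties, so it is a Jacobi form of weight $k'\ge k(\rho)$, index $\mathfrak{T}$, nebentypus $\chi$, level $\Gamma_0(N)$. Holomorphy is inherited from $F$, and the support condition in the remaining defining property --- in fact the cuspidal one --- follows because $F$ is a cusp form and $F\vert_\rho g$ stays cuspidal for every $g\in\SL_2(\Z)$, so at each cusp the Fourier coefficients of $\lambda\circ\varphi_\mathfrak{T}$ are supported on positive-definite matrices.

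The Fourier-coefficient bookkeeping behind the two transformation laws, and the passage of the Heisenberg relations to $\varphi_\mathfrak{T}$, are routine parts of the Fourier--Jacobi formalism recalled above. The one point that genuinely needs care is that one must take the \emph{lowest} $\GL_1$-weight occurring in $\varphi_\mathfrak{T}$, not the highest: the unipotent part of $CZ+D$ is a raising operator for the grading, so only the bottom graded piece transforms cleanly, and the minimality of $k'$ is exactly what simultaneously produces the scalar transformation law and the bound $k'\ge k(\rho)$.
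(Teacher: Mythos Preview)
Your argument is correct and is precisely the approach of B\"ocherer--Das to which the paper defers: decompose $V$ by the $\GL_1$-weights of $t\mapsto\mathrm{diag}(t,1,\dots,1)$, use that the unipotent factor of $CZ+D$ raises this grading, and project onto the lowest nonvanishing weight space of $\varphi_\mathfrak{T}$ to obtain a scalar automorphy factor of weight $k'\ge k(\rho)$. The paper's own proof consists only of the remark that the level-$1$ argument in \cite{BD} goes through verbatim for $\Gamma_0^n(N)$ with nebentypus, which is exactly what you have written out.
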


\begin{proof}
    B\"ocherer and Das prove this in \cite{BD} in the case of level $1$. The proof for the case of general level is identical and we refer the reader to \cite{BD} for the proof. 
\end{proof}

\section{Statement of Main Results} \label{Main results}

This is our main result: \\

\begin{theorem}\label{3.1}
    Let $F \in S_\rho(\Gamma_0^n(N), \chi)$ be a nonzero Siegel cusp form of odd and square-free level $N$ and arbitrary genus $n$. Assume $k(\rho) - n/2 \geq 2$ Further assume the following conditions on $\chi$:

\begin{enumerate}

\item $\mathfrak{f}_\chi = N$. 

\item For each $p \vert N$, $p$ divides the conductor of $\chi \epsilon_p$, where $\epsilon_p$ is the natural quadratic character attached to the number field $\Q(\sqrt{p})$. 

\end{enumerate}

Then there exists a fundamental $T \in \Lambda_n^+$ so that $a_F(T) \neq 0$ and $(d_T,N) = 1$.

\end{theorem}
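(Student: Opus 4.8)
The plan is to follow the Böcherer--Das strategy as outlined in the paper's Section 1.2, using the two strengthened inputs (Theorems \ref{3.2} and \ref{3.3}) as black boxes. First I would assume for contradiction that $a_F(T) = 0$ for every fundamental $T \in \Lambda_3^+$, and use the Taylor development of $F$ around $z = 0$ (where $Z = \begin{pmatrix} \tau & z \\ {}^tz & \mathfrak{Z} \end{pmatrix}$ in the $1 + 2$ block decomposition) to produce a genus $2$ Siegel cusp form $G$ for $\Gamma_0^2(N)$ whose Fourier coefficients are (up to nonzero normalizing factors and a finite linear combination governed by the representation $\rho$) built out of the Fourier--Jacobi coefficients of $F$. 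The nebentypus hypotheses on $\chi$ are exactly what is needed to guarantee $G$ lies in the right space $S_{\rho'}(\Gamma_0^2(N), \chi)$ with determinantal weight $k(\rho') \geq k(\rho) - $ (something) $\geq 2$, and moreover that $G$ is nonzero: the vanishing of all fundamental genus $3$ coefficients of $F$ must be arranged to force $G \neq 0$, which is the usual Taylor-coefficient bookkeeping.

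Next I would apply Theorem \ref{3.2} to $G$: since $G \in S_{\rho'}(\Gamma_0^2(N), \chi)$ is nonzero with $N$ odd square-free and $\chi$ satisfying conditions (1)--(2), there is a fundamental $\mathfrak{T} \in \Lambda_2^+$ with $\disc(\mathfrak{T})$ coprime to $N$ and $a_G(\mathfrak{T}) \neq 0$. Translating back through the Taylor development, this yields a nonzero Fourier--Jacobi coefficient $\varphi_{\mathfrak{T}}$ of $F$ of fundamental index $\mathfrak{T}$ with $\disc(\mathfrak{T})$ coprime to $N$; by Proposition \ref{reduce to scalar} we may replace $\varphi_{\mathfrak{T}}$ by a nonzero scalar-valued component $\phi \in J_{k', \mathfrak{T}}(N, \chi)$ with $k' \geq k(\rho) \geq 3$. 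The key point of insisting on $\disc(\mathfrak{T})$ coprime to $N$ (rather than just fundamental index, which would suffice in level $1$) is that it is precisely this condition that lets the discriminants of the resulting genus $3$ coefficients be controlled modulo $N$ later.

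Then I would invoke Theorem \ref{3.3} (Jacobi forms of fundamental index with discriminant coprime to odd $N$ are determined by their primitive theta components) to extract a nonzero \emph{primitive} theta component $h_\mu$ of $\phi$, which is an elliptic cusp form of half-integral weight $k' - 1$ and level divisible by $N$ (and $4$), with Fourier expansion $h_\mu(\tau) = \sum_{n} a_F\begin{pmatrix} n & {}^t\mu/2 \\ \mu/2 & \mathfrak{T} \end{pmatrix} q^{n - \mathfrak{T}^{-1}[\mu/2]}$ as recorded in Section 2.6. Because $h_\mu$ is primitive (its Fourier coefficients, after clearing the fractional shift, have content coprime to $2N$), a nonvanishing coefficient of $h_\mu$ at an argument that is odd and square-free and coprime to $N$ translates into a nonvanishing Fourier coefficient $a_F(T)$ for a genus $3$ matrix $T = \begin{pmatrix} n & {}^t\mu/2 \\ \mu/2 & \mathfrak{T}\end{pmatrix}$ whose discriminant $d_T$ is odd, square-free, and coprime to $N$ — in particular $T$ is fundamental. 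The existence of such a nonvanishing coefficient of the half-integral weight cusp form $h_\mu$ follows from known nonvanishing results for Fourier coefficients of half-integral weight forms at fundamental (odd, square-free) arguments, combined with the coprimality-to-$N$ bookkeeping inherited from $\disc(\mathfrak{T})$ being coprime to $N$ and $\mu$ being chosen appropriately. This contradicts the assumption that all fundamental genus $3$ coefficients of $F$ vanish, completing the proof.

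The main obstacle I anticipate is the very first step: controlling the Taylor development so that (a) the resulting genus $2$ form $G$ genuinely lies in $S_{\rho'}(\Gamma_0^2(N),\chi)$ with determinantal weight still $\geq 2$ and, crucially, is \emph{nonzero} under the contradiction hypothesis, and (b) the passage ``$a_G(\mathfrak{T}) \neq 0 \Rightarrow \varphi_{\mathfrak{T}} \neq 0$'' is legitimate. In level $1$ this is handled in \cite{BD}, but in higher level one must check that the nebentypus conditions (1)--(2) really do force $G$ (and, one level down, the relevant elliptic form) to be ``new'' or at least to have nonzero new component, which is exactly the subtlety the paper flags in Section 1.3; I would need to track the character through the Taylor coefficients carefully. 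A secondary obstacle is ensuring that the half-integral weight nonvanishing input applies with the simultaneous constraints ``odd, square-free, and coprime to $N$'' on the argument — this is where the hypothesis $\disc(\mathfrak{T})$ coprime to $N$ (from Theorem \ref{3.2}) does the essential work, and one must verify the elementary but fiddly claim that an odd square-free $n$ coprime to $N$ produces $d_T$ with the same properties.
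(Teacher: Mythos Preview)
Your overall strategy matches the paper's proof: Taylor develop to get a genus $2$ form $F^\circ$, apply Theorem~\ref{3.2}, extract a Fourier--Jacobi coefficient of fundamental index with discriminant coprime to $N$, reduce to a scalar Jacobi form, apply Theorem~\ref{3.3} for a primitive theta component, and finish with elliptic nonvanishing. A few corrections are in order, however.

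First, the contradiction framing is unnecessary and causes you to misidentify the obstacles. The Taylor form $F^\circ$ is automatically nonzero whenever $F$ is (it records the lowest-order nonvanishing Taylor term), and its determinantal weight satisfies $k(\rho' \otimes \mathrm{Sym}^{\nu_0}) \geq k(\rho) \geq 3$, not merely $\geq 2$; no ``newform'' issue arises because $\chi$ passes verbatim to $F^\circ$, so the hypotheses of Theorem~\ref{3.2} are met directly. Second, since the index $\mathfrak{T}$ is $2\times 2$ (so $n=2$ in the theta decomposition), the primitive theta component $h_\mu$ has \emph{integral} weight $k'-1$, not half-integral weight; the relevant nonvanishing input is the integer-weight statement (Theorem~4.6 of \cite{BD}), not a half-integral one. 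Third, you overclaim the conclusion: the paper does not obtain $d_T$ coprime to $N$. Primitivity of $\mu$ ensures the Fourier support of $h_\mu(d_\mathfrak{T}\tau)$ is coprime to $d_\mathfrak{T}$, but not to $N$; the paper then sieves over the primes $p\mid N$, and in the case where all $(n,p)=1$ coefficients of $g_i$ vanish it replaces $g_i$ by $g_i(\tau/p)$, which reintroduces a factor of $p$ into the eventual discriminant. The outcome is $d_T = \ell m$ with $\ell \mid N$ and $m$ odd, square-free, and coprime to $d_\mathfrak{T}N$, so $d_T$ is odd and square-free (since $N$ is) --- fundamental, as Theorem~\ref{3.1} requires, but not necessarily coprime to $N$. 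The substantive step you gloss over is precisely this sieving, not the Taylor bookkeeping you flag as the main obstacle.
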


In the case of genus $3$, there is a correspondence between $T \in \Lambda_3^+$ and orders in quaternion algebras (see \cite{BD}). Theorem \ref{3.1} and work of B\"ocherer and Das imply that we can specialize to Fourier coefficients corresponding to maximal orders: \\

\begin{theorem}\label{3.2}
    Let $F \in S_\rho(\Gamma_0^3(N), \chi)$ be a nonzero cusp form of odd and square-free level $N$ and genus $3$. Assume that $k(\rho) - 3/2\geq 2$. Further assume that $\chi$ satisfies conditions $(1)$ and $(2)$ as in Theorem $1$. Then there exists a $T  \in \Lambda_3^+$ so that $a_F(T) \neq 0$, $(\text{disc}(T),N) = 1$, and $T$ corresponds to a maximal order in a quaternion algebra. 
\end{theorem}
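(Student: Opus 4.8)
\textbf{Proof proposal for Theorem \ref{3.2}.}

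The plan is to follow the Eichler--Zagier strategy that Saha used in the even-weight case, but to twist the Eichler--Zagier map so that it produces a half-integral (or integral) weight elliptic cusp form carrying a nontrivial nebentypus, which in turn lets us control the discriminant of the resulting Fourier coefficient modulo $N$. First I would invoke the hypothesis $k(\rho) \geq 3$ and the genus-$2$ nonvanishing machinery to produce a nonzero Fourier--Jacobi coefficient $\varphi = \varphi_{\mathfrak{T}}$ of $F$ whose index $\mathfrak{T} = t$ is a \emph{single positive integer} that is odd, prime, and coprime to $N$; the key point, which distinguishes genus $2$ from higher genus, is that a primitive binary quadratic form represents infinitely many primes, so such an index exists directly without an inductive argument. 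By Proposition \ref{reduce to scalar} we may assume $\varphi$ is scalar-valued of some weight $k' \geq k(\rho) \geq 3$, index $t$, level $\Gamma_0(N)$ and nebentypus $\chi$.

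Next I would attach to $\varphi$ an elliptic cusp form via a twisted Eichler--Zagier map: choosing a Dirichlet character $\psi$ whose parity matches the weight (so the map does not vanish identically, which is exactly the obstruction for the untwisted map in odd weight), the map sends $\varphi$ to an elliptic cusp form $\mathcal{E}_\psi(\varphi)$ whose Fourier coefficients are (roughly) a $\psi$-weighted linear combination $\sum_{r} \psi(r)\, c(n, r)$ of the Fourier coefficients of $\varphi$, hence of the Fourier coefficients $a_F\!\left(\begin{smallmatrix} n & r/2 \\ r/2 & t\end{smallmatrix}\right)$ of $F$. The crucial structural input is Proposition \ref{4.3} (the injectivity of this twisted map, whose proof will exploit the primality of the index $t$), which guarantees $\mathcal{E}_\psi(\varphi) \neq 0$ since $\varphi \neq 0$; moreover this elliptic form inherits a nebentypus built from $\chi$ and $\psi$, and conditions (1) and (2) on $\chi$ are precisely what is needed to keep that nebentypus from being ``too small'' at the primes dividing $N$, so that the relevant component is a genuine newform of level divisible by $N$. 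I would then apply the known nonvanishing results for Fourier coefficients of elliptic (half-integral or integral weight) newforms at fundamental arguments coprime to the level to extract an index $D$ with $D$ fundamental, $(D, N) = 1$, and the corresponding coefficient of $\mathcal{E}_\psi(\varphi)$ nonzero.

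Finally I would unwind the twisted Eichler--Zagier map: a nonzero coefficient at $D$ forces some $a_F\!\left(\begin{smallmatrix} n & r/2 \\ r/2 & t\end{smallmatrix}\right) \neq 0$ with $4nt - r^2 = D$ (up to the usual normalizations), and since $t$ is prime and coprime to $N$ and $D = \operatorname{disc}$ of this $2\times 2$ matrix is odd and square-free and coprime to $N$, the matrix $T = \begin{pmatrix} n & r/2 \\ r/2 & t\end{pmatrix} \in \Lambda_2^+$ is fundamental with $(\operatorname{disc}(T), N) = 1$, which is exactly the claim. I expect the main obstacle to be the injectivity of the twisted Eichler--Zagier map in Proposition \ref{4.3} together with the bookkeeping of nebentypus and level through the map: one must verify that the twist by $\psi$ does not destroy injectivity (the prime index $t$ is what makes this work), that the output lands in a space of elliptic cusp forms whose newform components have level exactly divisible by the primes of $N$ (here conditions (1) and (2) are essential), and that the available elliptic-newform nonvanishing theorem applies in the precise weight and nebentypus produced — matching parities and conductors correctly is the delicate part.
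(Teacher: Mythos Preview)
Your proposal is correct and follows essentially the same route as the paper: a prime Fourier--Jacobi index obtained from the primitive binary quadratic form, passage to a scalar Jacobi form via Proposition~\ref{reduce to scalar}, the twisted Eichler--Zagier map of Proposition~\ref{4.3} with a character of the correct parity, and then an elliptic nonvanishing argument to extract a square-free discriminant coprime to $N$. The only cosmetic difference is on the elliptic side: where you invoke ``newform components of level divisible by $N$'', the paper carries this out concretely as a Serre--Stark sieving (Lemmas~\ref{4.4} and~\ref{4.5}) together with Saha's square-free nonvanishing (Lemma~\ref{4.2}), using condition~(2) on $\chi$ exactly to block the conductor drop at each prime $q\mid N$; this is the same mechanism you describe, just unpacked.
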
 

Our next result regards the theta components of Jacobi forms and is essential in our analysis of the Eichler--Zagier map: \\

\begin{theorem}\label{3.3}
    Let $\varphi \in J^{\text{cusp}}_{k, T}(\Gamma_0(N),\chi)$ be a nonzero Jacobi form with fundamental index $T \in \Lambda_n^+$, nebentypus $\chi$, and odd level $N$. If $(\disc(T),N) = 1$, then there exists $\mu \in \Z^n/2T\Z^n$ with so that $h_\mu \neq 0$ and $T^{-1}[\mu/2]$ has maximum possible denominator, equal to $\mathrm{lvl}(T)$.
\end{theorem}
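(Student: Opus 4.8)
The idea is to work with the theta decomposition $\varphi = \sum_{\mu \in \Z^n/2T\Z^n} h_\mu \Theta_{\mu,T}$ and argue by contradiction: suppose every $\mu$ with $h_\mu \ne 0$ has $T^{-1}[\mu/2]$ of denominator \emph{strictly less} than the maximal value $\disc(T)$ (resp. $4\disc(T)$ in the odd case). The plan is to show that this forces enough vanishing among the $h_\mu$ to contradict $\varphi \ne 0$, using the action of the metaplectic group on the vector $(h_\mu)_\mu$ together with the coprimality hypothesis $(\disc(T), N) = 1$. Concretely, since $T$ is fundamental, $\disc(T)$ is odd and square-free, so by the Chinese Remainder Theorem the discriminant form attached to $2T$ — equivalently the finite quadratic module $\Z^n/2T\Z^n$ with the quadratic form $x \mapsto T^{-1}[x/2] \bmod 1$ — decomposes as an orthogonal direct sum over the primes $q \mid \disc(T)$. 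The denominator of $T^{-1}[\mu/2]$ is maximal exactly when $\mu$ is "anisotropic at every $q \mid \disc(T)$" in the appropriate sense; I would first pin down this local description and note that such $\mu$ exist (this uses square-freeness of $\disc(T)$ crucially — for each prime $q$ the relevant quadratic form on $\F_q^n$ is nondegenerate).

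Next I would bring in the group action. The transformation laws (\ref{Theta T}) and (\ref{Theta S}) show that $S = \begin{pmatrix} 0 & -1 \\ 1 & 0 \end{pmatrix}$ and $T_\infty = \begin{pmatrix} 1 & 1 \\ 0 & 1 \end{pmatrix}$ act on the span of $\{\Theta_{\nu,T}\}$ through the Weil representation of the finite quadratic module, and correspondingly on $(h_\mu)$ through the dual/complex-conjugate representation. The key structural fact is that the Weil representation of a nondegenerate finite quadratic module of odd order is irreducible (or at least: has no proper invariant subspace spanned by a subset of the standard basis vectors $e_\mu$ with the maximal-denominator $\mu$'s excluded). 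So the hypothesis that $h_\mu = 0$ for all maximal-denominator $\mu$ would, after applying suitable elements of $\SL_2(\Z)$ and re-expanding via (\ref{Theta S}), propagate vanishing to \emph{all} $h_\mu$, hence $\varphi = 0$ — a contradiction. The coprimality $(\disc(T), N) = 1$ is what guarantees that the relevant $\SL_2(\Z)$ (or $\Gamma_0(N)$-coset) elements genuinely act on $\varphi$ via these theta-transformation formulas without interference from the nebentypus/level, i.e. it lets us decouple the level-$N$ structure from the $\disc(T)$-structure.

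In carrying this out I would follow the computational template of B\"ocherer--Das \cite{BD} and Anamby \cite{anamby}, adapting their level-$1$ / genus-$2$ arguments: reduce to a single prime $q \mid \disc(T)$ by the CRT orthogonal splitting, and at each $q$ run the explicit linear-algebra argument over $\F_q$ showing that the $q$-component of $(h_\mu)$ cannot be supported away from the anisotropic vectors. One extra point specific to our setting is tracking the denominators carefully in the odd-$n$ case: there the extra factor of $2$ (and the $i^{-n/2}$ in (\ref{Theta S})) means the "maximal denominator" is $4\disc(T)$ rather than $\disc(T)$, coming from the half-integrality normalization $\disc(T) = \frac12\det(2T)$; I would verify the denominator bookkeeping by a direct computation with $\tilde\mu = (2T)^{-1}\mu$ and the Fourier expansion $h_\mu(\tau) = \sum_{\ell \ge 0} c(\ell, {}^t\mu) q^{\ell - T^{-1}[\mu/2]}$.

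\textbf{Main obstacle.} The hard part will be the irreducibility/propagation step: showing precisely that vanishing of the maximal-denominator $h_\mu$'s forces vanishing of all of them. This is where one must use the nondegeneracy of the quadratic form mod each $q \mid \disc(T)$ (guaranteed by $\disc(T)$ square-free and odd) in an essential way, and where the combinatorics of the Weil representation over $\F_q^n$ — rather than just over $\F_q$ as in genus $2$ — has to be handled for general $n$. I expect this to require either a clean appeal to known irreducibility results for Weil representations of odd finite quadratic modules, or a self-contained inductive argument on $n$ using the orthogonal decomposition of $\F_q^n$ into hyperbolic planes plus at most a one-dimensional anisotropic piece.
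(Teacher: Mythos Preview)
Your high-level strategy --- contradiction, theta decomposition, and propagation of vanishing via the transformation laws (\ref{Theta T}), (\ref{Theta S}) --- matches the paper's. But your ``key structural fact'' contains a genuine error: the Weil representation of a nondegenerate finite quadratic module of odd order is \emph{not} irreducible in general. Already for $\Z/p\Z$ with the form $x \mapsto x^2/p$ it splits into the $\pm 1$-eigenspaces of $\mu \mapsto -\mu$. Your hedged version (no invariant coordinate subspace missing the primitive $e_\mu$'s) is closer to what is needed, but it is not a result you can simply cite, and in any case only the action of $\Gamma_0(N)$, not all of $\SL_2(\Z)$, is available. Also note that in the odd-$n$ case the module $\Z^n/2T\Z^n$ has order $2\disc(T)$, which is even, so your ``odd order'' hypothesis fails there too.

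The paper's route is more concrete and differs from your CRT-then-induct-on-$n$ plan. It applies the single element $\begin{pmatrix} 1 & 0 \\ N & 1 \end{pmatrix} \in \Gamma_0(N)$ to $\varphi$ and expands via (\ref{Theta T}), (\ref{Theta S}) to obtain, for each primitive $\eta$, a relation $\sum_\mu \varepsilon(\mu,\eta)\, h_\mu = 0$ with explicit coefficients $\varepsilon(\mu,\eta) = \sum_\nu \exp\bigl({}^t(\mu - \eta - N\nu/2)(2T)^{-1}\nu\bigr)$. A character-independence argument (using $h_\mu(\tau + t) = \exp(-T^{-1}[\mu/2]\,t)\,h_\mu(\tau)$) refines this to relations within each class $\{\mu : T^{-1}[\mu/2] \equiv T^{-1}[\mu_0/2] \bmod \Z\}$; you do not mention this step. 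Then the paper chooses $U \in \SL_n(\Z)$ diagonalizing $2T$ modulo each relevant prime power (with a single non-unit diagonal entry at each prime), which collapses the $n$-dimensional sum defining $\varepsilon(\mu,\eta)$ to a \emph{scalar} generalized Gauss sum. Only after this reduction to dimension one does CRT appear, as a tensor-product factorization in the rank lemma (Lemma~\ref{rank lemma}). The hypothesis $(\disc(T),N)=1$ enters concretely here: $N$ must be invertible modulo $4d_T$ for the Gauss-sum completion-of-the-square to go through. So your fallback (``follow the BD/Anamby template'') would succeed, but the template is: one $\Gamma_0(N)$-element, character-independence refinement, reduction to scalars via a diagonalizing $U$, then a Gauss-sum rank lemma --- not an irreducibility appeal and not an induction on $n$.
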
 

B\"ocherer and Das proved this in \cite{BD} in the case of level $1$, Anamby proved this in \cite{anamby} in the case of scalar index, arbitrary level. 

\section{Proof of Theorem \ref{3.3}} \label{Theorem 3.3 proof}
    
For convenience, we recall the statement of Theorem \ref{3.3} here: \\

{\bf Theorem 3.3}: Let $\varphi:\mathbb{H} \times \C^n \to \mathbb{C}$ be a non-zero scalar-valued Jacobi form of odd and square-free level $\Gamma_0(N)$, nebentypus $\chi$, fundamental index $T$. Also assume $(N,2d_T) = 1$. Let $\varphi = \sum_\mu h_\mu \Theta_\mu$ be the theta decomposition, $\mu \in \mathbb{Z}^{n}/2T\mathbb{Z}^n$. Then there exists $\mu$ so that $h_\mu \neq 0$ and $T^{-1}[\mu/2]$ has maximal possible denominator, equal to $\mathrm{lvl}(T)$ \\

\begin{proof}

We proceed by reducing to a calculation already carried out by Anamby in \cite{anamby}. This reduction follows the same steps of B\"ocherer and Das \cite{BD}.\\

From now on, if $\mu$ has the property that $T^{-1}[\mu/2]$ has maximal possible denominator, we call $\mu$ \textit{primitive} and the corresponding theta component $h_\mu$ a \textit{primitive component of $\varphi$}. We proceed in aim of a contradiction. \\

Assume all the primitive components of $\varphi$ vanish. From the transformation formulas (\ref{Theta T}) and (\ref{Theta S}) of the $\Theta_\mu$ and the modularity of $\varphi_T$ under $\begin{pmatrix}
    1 & 0 \\ N & 1
\end{pmatrix}$, we find that 

\[
(N\tau + 1)^k \sum_\eta h_\eta \Theta_\eta = \sum_\mu h_\mu\left(\frac{\tau}{N\tau+1}\right) \left(\det(2T)^{-1}(N\tau+1)^{n/2} \sum_{\eta,\nu} \exp\left(^t (\mu - \eta - N\frac{\nu}{2})(2T)^{-1}\nu\right) \Theta_\eta\right)
\]

Comparing coefficients, we acquire 

\[
(N\tau+1)^kh_\eta(\tau) = \det(2T)^{-1} (N\tau+1)^{(n-1)/2} \sum_{\mu, \nu} h_\mu\left(\frac{\tau}{N\tau+1}\right) \exp\left(^t( \mu - \eta - N\frac{\nu}{2})(2T)^{-1}\nu\right)
\]

Now sending $\tau \mapsto \tau/(-N\tau +1)$, this reduces to 

\[
h_\eta\left(\frac{\tau}{-N\tau+1}\right) = \det(2T)^{-1} (-N\tau+1)^{n/2 - k} \sum_\mu \varepsilon(\mu,\eta) h_\mu(\tau)
\]

where $\displaystyle \varepsilon(\mu,\eta) := \sum_\nu \exp\left(^t (\mu - \eta - N\frac{\nu}{2})(2T)^{-1}\nu\right)$. Assuming that $h_\eta = 0$ for every $\eta$ primitive, we acquire the relation 

\begin{equation}
    0 = \sum_\mu \varepsilon(\mu,\eta) h_\mu(\tau) \label{nonreduced relation}
\end{equation}

for every $\mu \in \mathbb{Z}^n/2T\mathbb{Z}^n$ and every primitive $\eta$. For every integer $t$, we see directly from the Fourier expansion of $h_\mu$ that $h_\mu(\tau + t) = \exp(- T^{-1}[\mu/2] t) h_\mu(\tau)$ for every integer $t$. Note that $t \mapsto \exp(-T^{-1}[\mu/2] t)$ is a character of $\mathbb{Z}$. With this in mind, we get that for every $t \in \Z$,

\[
    0 = \sum_\mu \varepsilon(\mu, \eta) \exp(-T^{-1}[\mu/2] t) h_\mu(\tau) 
\]

By linear independence of characters, we conclude that for every fixed $\mu_0$, we have 

\begin{equation}
0 = \sum_{\mu \sim \mu_0} \varepsilon(\mu,\eta) h_\mu(\tau) \label{reduced relation}
\end{equation}

where $\mu \sim \mu_0$ if and only if $t \mapsto \exp(-T^{-1}[\mu/2] t)$ and $t \mapsto \exp(-T^{-1}[\mu_0/2] t)$ define the same character of $\mathbb{Z}$. This is equivalent to the condition $T^{-1}[\mu/2] - T^{-1}[\mu_0/2] \in \mathbb{Z}$. \\

We will have reached a contradiction if the matrix $(\varepsilon(\mu,\eta))_{\eta,\mu}$ has maximal rank for every $\mu_0$ imprimitive, where $\eta$ runs over primitive columns and $\mu$ runs over columns equivalent to $\mu_0$. Indeed, this would imply that every imprimitive theta component vanishes and hence \textit{all} the theta components vanish. \\

We now split into the cases when $n$ is even and $n$ is odd. \\

\underline{$n$ odd:} In this case, $\mathrm{lvl}(T) = 4d_T$ (see Equation \ref{level of T}). We first simplify $\varepsilon(\mu,\eta)$ by reducing to the case of scalars (i.e. $\mu,\nu,\eta,T$ are all scalars). Put $M := 2T$, $U \in \text{SL}_n(\mathbb{Z})$, $\widetilde{M} := M[U]$. Pick $U$ so that, for some $f \geq 2$, $\widetilde{M} \equiv \text{diag}( \ast, \ast, \cdots , \ast, \ast p)$ mod $p^f$ for each odd prime divisor $p \vert \det(\widetilde{M}) = 2d_T$ and $\widetilde{M} \equiv \mathbb{H}\perp \mathbb{H}\perp \cdots \perp (\mathbb{H}\ \text{or} \ \mathbb{F}) \perp \ast 2$ mod $2^f$. Here $\ast$ denotes a $p$-adic unit, $\h := 2xy$, and $\mathbb{F} = 2x^2 + 2xy + 2y^2$. Refer to the proof of Proposition 3.5 in \cite{BD} for details on why such a $U$ exists.\\

Changing variables $\widetilde{( \cdot )} \ = \ ^tU ( \cdot )$, we get 

\[
\varepsilon(\mu,\nu) = \varepsilon(\tilde{\mu},\tilde{\nu}) := \sum_{\tilde{\nu}} \exp\left(^t (\tilde{\mu} - \tilde{\eta} - N\frac{\tilde{\nu}}{2})\widetilde{M}^{-1} \tilde{\nu}\right).
\]

By the congruence conditions on $\widetilde{M}$, the collection $\{\tilde{\nu}_t \ \vert \ t \ \text{mod} \ 2d_T\}$, $\tilde{\nu}_t := \ ^t (0 \ 0 \ \cdots \ 0 \ t)$, forms a set of representatives for $\mathbb{Z}^{n} / \widetilde{M}\mathbb{Z}^{n}$.\\

Now $\varepsilon(\tilde{\mu},\tilde{\nu})$ evaluates to 

\[
\varepsilon(\tilde{\mu}_s,\tilde{\eta}_r) = \sum_{t \ \text{mod} \ 2d_T} \exp\left( ^t(\tilde{\mu}_s - \tilde{\eta}_r - N\frac{\tilde{\nu}_t}{2}) \widetilde{M}^{-1} \tilde{\nu}_t\right)
\]

Let $m$ be the $n \times n$ entry of the adjugate of $\widetilde{M}$. Then by Cramer's rule, we have

\[
\tilde{\mu}_s \widetilde{M}^{-1} \tilde{\nu}_t = smt / (2d_T)
\]

So 

\[
\varepsilon(\tilde{\mu}_s,\tilde{\eta}_r) = \varepsilon(s,r) = \sum_t e^{\frac{2\pi i }{2 d_T} m((s-r)t  - Nt^2/2)}
\]

\[
= \frac{1}{2} \sum_{t \ \text{mod} \ 4 d_T} e^{\frac{2\pi i }{4 d_T} m(2(s-r)t  - Nt^2)} = \frac{1}{2} G(-Nm, 2(s-r)m, 4d_T)
\]

Here, $G(a,b,c) = \sum_{t \ \text{mod} \ c} e^{\frac{2\pi i }{c} (at^2 + bt)}$ is the generalized quadratic Gauss sum. In this setting of scalars, $r$ is primitive iff $(r,2d_T) = 1$ and $s \sim s_0$ iff $s^2 - s_0^2 \equiv 0 \ \text{mod} \ 4d_T$ (both facts arise from fact that $m$ is coprime to $4d_T$). Now the problem has turned into showing that for every fixed imprimitive $s_0$, the matrix $(\varepsilon(s,r))_{r,s}$ has maximal rank, where $r$ runs over $(\mathbb{Z}/2d_T)^\times$ and $s$ runs over $\mathbb{Z}/2d_T$ with $s^2 = s_0^2$ mod $4d_T$. Note that the number of rows is $\prod_{p \vert 2d_T} (p-1)$ and the number of columns is $2^{t'}$, where $t'$ is the number of odd primes dividing $2d_T$ but not $s_0$. Thus, the number of rows is at most the number of columns and so proving maximality of the rank really does suffice for the desired contradiction. \\

We do one more simplification of $\varepsilon(s,r)$. By completing the square along with a calculation from Anamby, we have 

\[
\varepsilon(s,r) = G(-Nm,0,4d_T) \cdot \exp \left({-m \overline{N} \frac{(s-r)^2}{4d_T}}\right)
\]

where $\overline{N} \in \Z$ so that $\overline{N} N \equiv 1$ mod $4d_T$. From this, we see that it suffices to show the matrix 

\[
\left(\exp \left({-m \overline{M} \frac{(s-r)^2}{m_1^2}}/(4m_2)\right)\right)_{r,s}
\]

has maximal rank. This is done in \cite{anamby} (Lemma 3.3, take $\ell = -m \overline{M})$). Thus, we have reached a contradiction since $s_0$ may be chosen to be any imprimitive. \\

\underline{$n$ even}: In this case, $\mathrm{lvl}(T) = d_T$. We proceed as in the odd case and pick $U \in \SL_n(\Z)$ so that $\widetilde{M} \equiv \text{diag}(\ast,...,\ast, \ast p)$ mod $p^f$ for each odd prime divisor $p \vert \det(\widetilde{M}) = d_T$ and $\widetilde{M} \equiv \h \perp \cdots \perp (\h \ \text{or} \ \mathbb{F})$ mod $2^f$. As before, the set $\{\tilde{\nu}_t \ \vert \ t \in \Z/d_T\}$ forms a set of representatives for $\Z^n/\widetilde{M}\Z^n$. Changing variables $\widetilde{( \cdot )} = \ ^tU ( \cdot )$, we find 

\[
\varepsilon(\tilde{\mu}_s, \tilde{\eta}_r) = \varepsilon(s,r) = G(-Nm', 0, d_T) \exp(-Nm'(s-r)^2/d_T).
\]

Here, $m' := m/2$, where $m$ is the $n \times n$ entry of the adjugate of $\widetilde{M}$. Note that $m$ is even in this case because of the form of $\widetilde{M}$ mod $2^f$. Hence, it remains to show that for every $s_0 \in \Z/d_T\Z$ not coprime to $d_T$, the matrix 

\[
\left( \exp(-Nm'(s-r)^2/d_T )\right)_{r,s}
\]

has maximal rank, where $r$ runs over $(\Z/d_T\Z)^{\times}$ and $s$ runs over $\Z/d_T\Z$ so that $s^2 \equiv s_0^2$ mod $d_T$. Anamby does not quite prove this in \cite{anamby}, so we prove it here, formulated as the following lemma: \\

\begin{lemma} \label{rank lemma}
    Let $d$ be an odd, square-free integer and $a \in \Z$ coprime to $d$. For an integer $s_0$, define the matrix $\mathcal{E}_{a,d}(s_0) := \left( \exp(a(s-r)^2/d)\right)_{r,s}$, where $r$ runs over $(\Z/dZ)^\times$ and $s$ runs over $\Z/d\Z$ so that $s^2 \equiv s_0^2$ mod $d$. Then $\mathcal{E}_{a,d}(s_0)$ has maximal rank, equal to the number of columns, $2^t$, where $t$ is the number of primes dividing $d$ but not $s_0$.  
\end{lemma}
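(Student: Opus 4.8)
The plan is to reduce the rank computation to a linear-independence statement for additive characters of $\Z/d\Z$ restricted to the unit group, and then to split that statement over the prime divisors of $d$ via the Chinese Remainder Theorem.

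First I would strip away the parts of $\exp(a(s-r)^2/d)$ depending only on $r$ or only on $s$. Using $a(s-r)^2 = ar^2 - 2ars + as^2$ and $\exp(\bullet) = e^{2\pi i\tr(\bullet)}$, one obtains $\mathcal{E}_{a,d}(s_0) = D_1 M D_2$ with $D_1 = \mathrm{diag}\big(e^{2\pi i ar^2/d}\big)_r$ and $D_2 = \mathrm{diag}\big(e^{2\pi i as^2/d}\big)_s$ invertible and $M = \big(\zeta^{-2ars}\big)_{r,s}$, $\zeta = e^{2\pi i/d}$. Since $d$ is odd and $(a,d) = 1$, the integer $-2a$ is a unit mod $d$, so substituting $u = -2ar$ (which permutes $(\Z/d\Z)^\times$) gives
\[
\mathrm{rank}\,\mathcal{E}_{a,d}(s_0) \;=\; \mathrm{rank}\,\big(\zeta^{us}\big)_{u\in(\Z/d\Z)^\times,\; s\in S}, \qquad S := \{\,s\bmod d : s^2\equiv s_0^2\,\}.
\]
Thus it suffices to prove the columns $v_s := (\zeta^{us})_{u\in(\Z/d\Z)^\times}$, $s\in S$, are linearly independent.

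Next I would factor over $d = p_1\cdots p_k$. By CRT, $s\in S$ iff $s\equiv\pm s_0\pmod{p_i}$ for all $i$, so $S\cong\prod_i S_i$ with $S_i = \{s_0,-s_0\}\subset\Z/p_i\Z$; hence $|S_i| = 1$ when $p_i\mid s_0$ and $|S_i| = 2$ otherwise, and $|S| = 2^t$ with $t = \#\{i : p_i\nmid s_0\}$. From a partial-fraction identity $1/d = \sum_i a_i/p_i$ with each $a_i$ a unit mod $p_i$, one gets $\zeta^n = \prod_i\psi_i(n)$ where $\psi_i(n) := e^{2\pi i a_i n/p_i}$ is a \emph{nontrivial} additive character of $\Z/p_i\Z$. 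Under the natural identification $\C^{(\Z/d\Z)^\times}\cong\bigotimes_i\C^{(\Z/p_i\Z)^\times}$ induced by $(\Z/d\Z)^\times\cong\prod_i(\Z/p_i\Z)^\times$, the column $v_s$ becomes the elementary tensor $\bigotimes_i w_{i,s_i}$, where $s_i = s\bmod p_i$ and $w_{i,c} := (\psi_i(cu))_{u\in(\Z/p_i\Z)^\times}$. Since a family of elementary tensors formed from linearly independent families in each factor is itself linearly independent (extend each family to a basis of its factor), the claim reduces to: for each $i$, the set $\{w_{i,c} : c\in S_i\}$ is linearly independent.

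This local statement is the heart of the proof and the step I expect to be the main obstacle, because linear independence must be controlled on $(\Z/p_i\Z)^\times$ rather than on all of $\Z/p_i\Z$, where exactly one dimension is lost. When $|S_i| = 1$ we have $S_i = \{0\}$ and $w_{i,0}$ is the nonzero all-ones vector. When $|S_i| = 2$, the vectors $w_{i,s_0}$ and $w_{i,-s_0}$ are the restrictions to $(\Z/p_i\Z)^\times$ of two distinct nontrivial additive characters of $\Z/p_i\Z$ (distinct and nontrivial because $p_i$ is odd and $p_i\nmid s_0$). Here I would use that a linear combination $\sum_{c}\lambda_c\psi_i(c\,\bullet)$ of the $p_i$ additive characters vanishes on $(\Z/p_i\Z)^\times$ precisely when all $\lambda_c$ are equal (equivalently, when it is a scalar multiple of $\mathbf{1}_{\{0\}} = \tfrac1{p_i}\sum_c\psi_i(c\,\bullet)$); consequently the restrictions to $(\Z/p_i\Z)^\times$ of any \emph{proper} subfamily of these $p_i$ characters are linearly independent, and $\{\psi_i(s_0\,\bullet),\psi_i(-s_0\,\bullet)\}$ is such a proper subfamily since $p_i\ge 3$. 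Combining the local independence with the tensor argument yields $\mathrm{rank}\,\mathcal{E}_{a,d}(s_0) = |S| = 2^t$, which equals the number of columns. The oddness and square-freeness of $d$ are used only to make $-2a$ invertible mod $d$ and to guarantee $\{\pm s_0\}$ is a proper subset of $\Z/p_i\Z$.
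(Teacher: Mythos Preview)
Your argument is correct. Both your proof and the paper's rely on the Chinese Remainder Theorem and a tensor-product reduction to the case of a single prime, so the two are close in spirit; the execution, however, differs in two notable ways. First, you preprocess by factoring out the diagonal matrices $D_1,D_2$ to replace $\mathcal{E}_{a,d}(s_0)$ with the character matrix $(\zeta^{us})_{u,s}$, whereas the paper works with the original Gauss-sum entries and establishes the identity $\mathcal{E}_{a,d}(s_0)=\mathcal{E}_{ap_{t+1},d'}(s_0')\otimes\mathcal{E}_{ad',p_{t+1}}(s_0'')$ directly, then inducts using $\mathrm{rank}(A\otimes B)=\mathrm{rank}(A)\cdot\mathrm{rank}(B)$. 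Second, for the prime base case the paper computes the $2\times 2$ minors of $\mathcal{E}_{a,p}(s_0)$ explicitly and checks they do not all vanish, while you give the more structural observation that the restrictions of the $p$ additive characters of $\Z/p\Z$ to $(\Z/p\Z)^\times$ satisfy a single linear relation (all coefficients equal), so any proper subfamily---in particular $\{\psi(s_0\,\cdot),\psi(-s_0\,\cdot)\}$ when $p\ge 3$---remains independent. Your route is a little cleaner and makes the role of oddness transparent; the paper's route keeps the Gauss-sum structure visible throughout, which matches how the lemma is invoked in the surrounding argument.
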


The proof of Lemma \ref{rank lemma} is an induction on the number of prime divisors of $d$. If $d = 1$, $\mathcal{E}_{a,d}(s_0)$ is a column and so the maximal rank condition is equivalent to $\mathcal{E}_{a,d}(s_0)$ having a nonzero entry. Every entry is nonzero, so this is clear. \\

If $d = p$ is an odd prime, we have two cases: $p \vert s_0$ and $p 
\not\vert s_0$. When $p \vert s_0$, $\mathcal{E}_{a,d}(s_0)$ is again a column and the claim is clear. If $p \not\vert s_0$, then $\mathcal{E}_{a,d}(s_0)$ has two columns, one corresponding to $s_0$ and the other corresponding to $-s_0$. The $2 \times 2$ minors are of the form:

\[
\exp\left( \frac{a((s_0-r)^2 + (s_0+r')^2)}{p} \right) - \exp\left( \frac{a((s_0-r')^2 + (s_0+r)^2)}{p}\right)
\]

where $r,r' \in (\Z/p)^\times$. With a short calculation, noting that $p$ is odd and $(s_0,p) = 1$, we see that the expression above vanishes iff $r = r'$ mod $p$. Hence, $\mathcal{E}_{a,d}(s_0)$ has a nonzero $2 \times 2$ minor and has rank $2$. \\

Now assume that $\mathcal{E}_{a,d}(s_0)$ has maximal rank whenever $d$ is composed of no more than $t \geq 1$ prime factors. Suppose that $d = p_1 p_2 \cdots p_t p_{t+1}$ is the prime decomposition of $d$. Put $d' := d/p_{t+1}$. Decompose $s = p_{t+1}s' + d's''$, where $s'$ is determined modulo $d'$ and $s''$ is determined modulo $p_{t+1}$. Similarly, decompose $r = p_{t+1}r' + d' r''$. Then 

\[
(s-r)^2 = p_{t+1}^2(s'-r')^2 + (d')^2(s''-r'')^2 \ \text{mod} \ d
\]

As $r',r''$ and $s',s''$ run over the sets $(\Z/d'\Z)^\times$, $(\Z/p_{t+1}\Z)^\times$ and $\{s' \in \Z/d'\Z \ \vert \ (s')^2 \equiv (s_0')^2 \ \text{mod} \ d'\}$, $\{s'' \in \Z/p_{t+1}\Z \ \vert \ (s'')^2 \equiv (s_0'')^2 \ \text{mod} \ p_{t+1}\}$, $r$ and $s$ run over the sets $(\Z/d\Z)^\times$ and $\{s \in \Z/d\Z \ \vert \ s^2 \equiv s_0^2 \ \text{mod} \ d\}$. Hence, $\mathcal{E}_{a,d}(s_0)$ decomposes into the tensor product 

\[
\mathcal{E}_{a,d}(s_0) = \mathcal{E}_{ap_{t+1}, d'}(s_0') \otimes \mathcal{E}_{ad', p_{t+1}}(s_0'')
\]

By the well-known formula $\text{rank}(A \otimes B) = \text{rank}A \cdot \text{rank}B$, Lemma \ref{rank lemma} is proven and this completes the proof of Theorem \ref{3.3}

\end{proof}

\section{Some Propositions and Lemmas} \label{props and lemmas}

In this section, we collect some results essential to Theorems \ref{3.1} and \ref{3.2}. 

\subsection{Elliptic Modular Forms}

Two more lemmas we need are due to Serre and Stark \cite{SS} in the half-integer setting and are classical in the integer setting. \\

\begin{lemma}[Lemma 1 in \cite{Yamana}] \label{4.4}
    Let $k \in \frac{1}{2}\Z$ and $f \in S_{k}(\Gamma_0(N),\chi)$. For any prime $p$, the function $\displaystyle g(\tau) := \sum_{(n,p) = 1} a_f(n) q^n \in S_{k-1/2}(\Gamma_0(Np^2),\chi)$. 
\end{lemma}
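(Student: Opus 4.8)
The plan is to obtain $g$ by subtracting from $f$ its ``$p$-divisible part'' and then to check the level of that part by a direct coset computation. Using the identity $\frac{1}{p}\sum_{j=0}^{p-1}e^{2\pi i jn/p}=\mathbf{1}[p\mid n]$, one has
\[
h(\tau):=\sum_{p\mid n}a_f(n)q^n=\frac1p\sum_{j=0}^{p-1}f\!\left(\tau+\tfrac jp\right),\qquad g=f-h .
\]
Since $4N\mid 4Np^2$ we already have $f\in S_{k-1/2}(\Gamma_0(4Np^2),\chi)$, so it suffices to prove $h\in S_{k-1/2}(\Gamma_0(4Np^2),\chi)$; then $g=f-h$ lies there too. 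Holomorphy of $h$ (hence $g$) on $\h$ is immediate, cuspidality of $g$ at $\infty$ is visible from its $q$-expansion (a sub-sum of that of the cusp form $f$), and cuspidality at the other cusps follows from the transformation law below together with the cuspidality of $f$ at all cusps, after writing $h\vert_{k-1/2}\sigma=\frac1p\sum_j f\vert_{k-1/2}(\gamma_j\sigma)$ for $\sigma\in\mathrm{SL}_2(\Z)$ and using that $f$ is a cusp form at the cusps $\gamma_j\sigma\langle\infty\rangle$.

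The heart of the matter is the coset computation. Put $\gamma_j:=\left(\begin{smallmatrix}1&j/p\\0&1\end{smallmatrix}\right)$, so that $f(\tau+j/p)=f\vert_{k-1/2}\gamma_j$ (the half-integral automorphy factor of $\gamma_j$ is trivial). Fix $\gamma=\left(\begin{smallmatrix}a&b\\c&d\end{smallmatrix}\right)\in\Gamma_0(4Np^2)$; then $4Np^2\mid c$, and since $ad-bc=1$ with $p\mid c$ the residues $a,d$ are units modulo $p$. For $j\in\{0,\dots,p-1\}$ let $j'\in\{0,\dots,p-1\}$ be the representative of $a^{-1}dj\bmod p$; as $j$ runs through a complete residue system mod $p$, so does $j'$. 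A direct multiplication gives
\[
\gamma_j\,\gamma=\gamma'_j\,\gamma_{j'},\qquad \gamma'_j=\begin{pmatrix} a+(c/p)j & \ast\\ c & d-(c/p)j'\end{pmatrix},
\]
and one checks that $\gamma'_j\in\Gamma_0(4N)$: integrality of the two diagonal entries uses $p^2\mid c$, integrality of the upper-right entry uses $p^2\mid c$ together with the congruence $jd\equiv aj'\pmod p$ built into the choice of $j'$, and $4N\mid c$ is clear. The decisive point is that $c/p$ is divisible by $4Np$, so the lower-right entry of $\gamma'_j$ satisfies $d-(c/p)j'\equiv d\pmod{4N}$, whence $\chi\!\big(d-(c/p)j'\big)=\chi(d)$; this is exactly where level $4Np^2$ (rather than $4Np$) is used.

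Feeding this into the slash operator and using $f\vert_{k-1/2}\gamma'_j=\chi\!\big(d-(c/p)j'\big)f=\chi(d)f$ by the modularity of $f$ for $\Gamma_0(4N)$, we obtain
\[
h\vert_{k-1/2}\gamma=\frac1p\sum_{j=0}^{p-1} f\vert_{k-1/2}(\gamma'_j\gamma_{j'})=\frac1p\sum_{j=0}^{p-1}\chi(d)\,f\vert_{k-1/2}\gamma_{j'}=\chi(d)\cdot\frac1p\sum_{j'=0}^{p-1}f\!\left(\tau+\tfrac{j'}p\right)=\chi(d)\,h(\tau),
\]
using the bijection $j\leftrightarrow j'$ in the last two steps. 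Together with the holomorphy and cuspidality remarks above, this shows $h\in S_{k-1/2}(\Gamma_0(4Np^2),\chi)$, hence $g=f-h\in S_{k-1/2}(\Gamma_0(4Np^2),\chi)$.

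I expect the only genuinely delicate point to be the compatibility of the half-integral-weight multiplier systems through the identity $\gamma_j\gamma=\gamma'_j\gamma_{j'}$ — i.e.\ that the $j$-factors compose without introducing a spurious $(2k-1)$-st root of unity — which is handled by the explicit transformation formulas for weight $k-1/2$ (the $\epsilon_d$ and Kronecker-symbol terms, cf.\ \cite{koblitz}) and is where one must be careful, though it is routine. In any event the statement is Lemma~4 of \cite{SS}.
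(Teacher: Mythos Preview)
The paper does not prove this lemma; it is stated with a citation to Serre--Stark \cite{SS} and used as a black box. Your sketch is essentially the standard argument (and the one in \cite{SS}): write the $p$-divisible part of $f$ as the average $\frac{1}{p}\sum_j f(\tau+j/p)$, then verify by the coset identity $\gamma_j\gamma=\gamma'_j\gamma_{j'}$ that this average transforms under $\Gamma_0(4Np^2)$ with the correct character. The computation you give is correct, and you have correctly isolated the one genuinely non-formal point, namely that the half-integral automorphy factor of $\gamma$ and of $\gamma'_j$ agree after the shift $\tau\mapsto\tau+j'/p$. Concretely this comes down to checking $\left(\frac{c}{d}\right)\epsilon_d^{-1}=\left(\frac{c}{d'_j}\right)\epsilon_{d'_j}^{-1}$ for $d'_j=d-(c/p)j'$; since $c/p\equiv 0\pmod{4}$ the $\epsilon$-factors match, and the Kronecker symbols match because $d'_j\equiv d$ modulo every odd prime power dividing $c$ and modulo $8$. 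So your proposal is a complete and correct proof, and there is nothing further to compare.
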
 

\begin{lemma}[(Lemma 7 in \cite{SS}] \label{4.5}
    Let $k \in \Z$. $f \in S_{k-1/2}(\Gamma_0(4N),\chi)$. Fix a prime $p$. Suppose that $a_f(n) = 0$ whenever $(n,p) = 1$. Then $p \vert \frac{N}{4}$, $\chi \varepsilon_p$ has conductor dividing $N/p$, and $g(\tau) := f(\tau/p) \in S_{k-1/2}(\Gamma_0(N/p), \chi \varepsilon_p)$. Here, $\varepsilon_p$ is the unique quadratic character attached to $\mathbb{Q}(\sqrt{p})$.\\
\end{lemma}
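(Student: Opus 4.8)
The statement is the weight-$(k-1/2)$ version of Lemma~7 of \cite{SS}, and the plan is to run the Serre--Stark argument essentially verbatim: the only difference from weight $1/2$ is the harmless extra factor $(c\tau+d)^{-(k-1)}$ in the slash operator, which is single-valued and transforms the obvious way under the scalings below. First I would rephrase the hypothesis. The condition that $a_f(n) = 0$ for every $n$ coprime to $p$ says precisely that the $q$-expansion of $f$ is supported on multiples of $p$, i.e. $f = h \mid V_p$ where $h := f \mid U_p$ has $q$-expansion $\sum_m a_f(pm)q^m$; as a holomorphic function on $\h$ this $h$ is exactly $g(\tau) = f(\tau/p)$, so $f(\tau) = g(p\tau)$ and, up to a constant coming from the automorphy factor, $g = f \mid \left(\begin{smallmatrix} 1/p & 0 \\ 0 & 1\end{smallmatrix}\right)$. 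Everything then reduces to showing that this $g$ is a cusp form of the claimed level and nebentypus.

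Next I would extract the transformation law of $g$. For $\gamma = \left(\begin{smallmatrix} a & b \\ c & d \end{smallmatrix}\right)$ in the candidate level group, conjugating by $\xi := \left(\begin{smallmatrix} p & 0 \\ 0 & 1\end{smallmatrix}\right)$ and using the matrix identity $\xi^{-1}\gamma = \gamma'\xi^{-1}$ with $\gamma' = \left(\begin{smallmatrix} a & b/p \\ pc & d\end{smallmatrix}\right)$ shows that $g$ inherits the correct transformation under every $\gamma$ with $p \mid b$ and with the level of $f$ dividing $pc$. Combining this with the invariance $g(\tau+1) = g(\tau)$ visible from the integral $q$-expansion of $g$, together with a short computation at the cusp $0$ via the Fricke involution $W_{4N}$, promotes $g$ to a genuine modular form on the congruence subgroup whose level is that of $f$ divided by $p$; in particular this is only possible when $p$ divides the level of $f$, which yields the divisibility assertion. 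Holomorphy of $g$ on $\h$ is automatic, and holomorphy --- indeed vanishing --- at the cusps follows from that of $f$ together with the $W_{4N}$ computation, so $g$ is cuspidal. For the nebentypus, one observes that the multiplier system of $g(p\tau) = f(\tau)$, viewed against that of a form on the smaller level, differs by exactly the quadratic character $\varepsilon_p$ attached to $\Q(\sqrt p)$ --- the character picked up from rescaling the $\sqrt{\cdot}$-factor by $p$; matching this against the given nebentypus $\chi$ of $f$ forces $\chi\varepsilon_p$ to be defined modulo the smaller level, i.e. to have conductor dividing it, and identifies the nebentypus of $g$ as $\chi\varepsilon_p$.

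The only genuinely computational ingredient, and the place I expect the real work to be, is verifying that the discrepancy between the automorphy factor $j(\gamma,\tau) = \left(\frac{c}{d}\right)\epsilon_d^{-1}\sqrt{c\tau+d}$ for $\gamma$ and the corresponding factor for the conjugate $\gamma'$ is precisely $\varepsilon_p(d)$ (equivalently a Kronecker symbol $\left(\frac{\cdot}{\cdot}\right)$ up to the usual $\epsilon_d$-factors). This is a quadratic-reciprocity identity, identical to the one carried out in \cite{SS}, and it is clean for $p$ odd --- the only case needed for Theorems \ref{3.1} and \ref{3.2}. Granting it, all the pieces fit together exactly as in \cite{SS}, so in the write-up it should suffice to cite \cite{SS} and remark that the passage to weight $k-1/2$ changes nothing.
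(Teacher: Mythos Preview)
The paper does not supply its own proof of this lemma: it is stated with the attribution ``Lemma 7 in \cite{SS}'' and used as a black box. Your proposal ultimately recommends exactly that---cite \cite{SS} and note that the passage from weight $1/2$ to weight $k-1/2$ is harmless---so you land in the same place, with the added bonus of a correct sketch of the Serre--Stark argument.
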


We also need the following easy modification of results from Saha \cite{Saha} and B\"ocherer and Das \cite{BD}. \\

\begin{lemma} \label{4.2}
    Let $k \in \frac{1}{2}\Z$ and $f \in S_{k}(\Gamma_0(4N),\chi)$ be an elliptic cusp form. Assume that $k \geq 5/2$. Suppose that $a_f(n) \neq 0$ for some $(n,N) = 1$. Then there exists $m$ odd and square-free so that $(m,N) = 1$ and $a_f(m) \neq 0$. 
\end{lemma}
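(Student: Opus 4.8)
The plan is to reduce to the corresponding statement \emph{without} any coprimality constraint — namely that a nonzero half-integral weight cusp form of weight at least $5/2$ has \emph{some} square-free Fourier coefficient, which is the result of Saha \cite{Saha} that Lemma \ref{4.2} modifies — and then to manufacture the two extra divisibility conditions by repeatedly applying the Serre--Stark sieving results of Lemmas \ref{4.4} and \ref{4.5}, first for the primes dividing $N$ and then for the prime $2$.

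Concretely, I would first apply Lemma \ref{4.4} once for each prime $p \mid N$, so that $g(\tau) := \sum_{(n,N)=1} a_f(n) q^n$ lies in $S_{k-1/2}(\Gamma_0(4N\prod_{p\mid N}p^2),\chi)$; since $a_g(n)=a_f(n)\neq 0$ for the index $n$ coprime to $N$ supplied by hypothesis, $g\neq 0$. Next I would show that $f$ has a nonzero Fourier coefficient at an index that is simultaneously odd and coprime to $N$: if instead every odd Fourier coefficient of $g$ vanished, then, when $N$ is odd, Lemma \ref{4.5} applied to $g$ with the prime $2$ would force $2$ to divide $\frac{1}{4}N\prod_{p\mid N}p^2$, which is absurd since $N\prod_{p\mid N}p^2$ is odd; and when $N$ is even there is nothing to prove, since any $n$ coprime to $N$ is already odd, so $g$ itself would then be $0$. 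Either way we obtain an odd $n_1$ with $(n_1,N)=1$ and $a_f(n_1)\neq 0$. Applying Lemma \ref{4.4} once more with the prime $2$, the form $h(\tau) := \sum_{(n,2N)=1} a_f(n) q^n$ lies in $S_{k-1/2}(\Gamma_0(16N\prod_{p\mid N}p^2),\chi)$ and is nonzero because $a_h(n_1)=a_f(n_1)\neq 0$. Finally I would invoke Saha's result for $h$: it produces a square-free $m$ with $a_h(m)\neq 0$, and by the construction of $h$ this forces $(m,2N)=1$, so $m$ is odd, square-free, coprime to $N$, and $a_f(m)=a_h(m)\neq 0$, as wanted.

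This is really just a bookkeeping argument once Lemmas \ref{4.4}, \ref{4.5} and the cited result of Saha are in hand; the only genuinely delicate point is the middle step, where Lemma \ref{4.5} is used to exclude the possibility that $g$ is supported entirely on even indices — this is precisely where the oddness of the sieved level (inherited from $N$) does the work. If the version of Saha's result one quotes already outputs an \emph{odd} square-free index, this middle step can be dropped and Saha's result applied directly to $g$. I would also note that the hypothesis $k\geq 3$ is what keeps all of these inputs away from the pathological half-integral weights $1/2$ and $3/2$.
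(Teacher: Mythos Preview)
Your proof is correct and follows the same strategy as the paper: sieve away the primes of $2N$ using Lemma \ref{4.4}, then apply Saha's Proposition 3.7 to the resulting nonzero form. The only real difference is the order of sieving: the paper removes the prime $2$ first and then the primes of $N$, whereas you do the reverse. Your ordering is actually cleaner, because the paper's claim ``$a_{g_t}(n)=a_f(n)\neq 0$'' tacitly assumes the hypothesized $n$ is odd, which is not part of the lemma's hypotheses (though it holds in every application in the paper); your use of Lemma \ref{4.5} to rule out $g$ being supported on even indices handles this case explicitly. As you yourself note, since the cited version of Saha's result already returns an \emph{odd} square-free index, the middle step can be omitted and Saha applied directly to $g$; this is in effect what the paper does.
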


\begin{proof}

Let $q_1,...,q_t$ be the distinct primes dividing $N$. Define $g_{-1} := f$, $\displaystyle g_0 := \sum_{(\ell,2) = 1} a_{g_0}(\ell) q^\ell$, and $\displaystyle g_{k+1}(\tau) := \sum_{(\ell, q_{k+1})=1} a_{g_k}(\ell) q^\ell$. We have $g_t \neq 0$ because $a_{g_t}(n) =a_f(n) \neq 0$. By Lemma \ref{4.4}, $g_t$ is modular with respect to $\Gamma_0(16N(q_1 \cdots q_t)^2)$ and nebentypus $\chi$. We claim $g_t$ has nonzero odd and square-free Fourier coefficients, proving the lemma. When $k$ is a half-integer, this follows from Proposition 3.7 of \cite{Saha}. If $k$ is an integer, this follows from Theorem 4.6 in \cite{BD}. 

\end{proof}

\subsection{The Eichler--Zagier Map} \label{The Eichler--Zagier Map}

This section develops a small part of the theory for the Eichler--Zagier map in arbitrary genus and level. The ideas appearing here are inspired by work due to Skoruppa, in \cite{Skoruppa}, and Eichler and Zagier, in \cite{EZ}, who worked out the case of level $1$. \\

We begin with a lemma:

\begin{lemma} \label{primitivity lemma}
    Let $T \in \Lambda_n^+$ be fundamental with discriminant $d_T$. Fix any $\Z$-module isomorphism $\alpha:\Z^n/(2T)\Z^n \to \Z/d'\Z$, where $d' = d_T$ if $n$ is even and $d' = 2d_T$ if $n$ is odd. Then $\mu \in \Z^n/(2T)\Z^n$ is primitive if and only if $\alpha(\mu) \in (\Z/d'\Z)^\times$. 
\end{lemma}

\begin{proof}
    We split into cases when $n$ is even or odd. 

    \underline{$n$ even}: In this case $d_T = \det(2T)$. By standard linear algebra (e.g. Smith normal form), $\alpha$ exists. By Proposition 2.3 of \cite{BD}, there exists a primitive $\mu \in \Z^n/(2T)\Z^n$ and $T^{-1}[\mu/2]$ has denominator $d_T$. Because $d_T$ is square-free, we have that for every $0 < m < d_T$, $T^{-1}[m\mu/2] \notin \Z$. In particular, $m\mu \notin (2T)\Z^n$. So $\mu$ is a generator of $\Z^n/(2T)\Z^n$ and we necessarily have $\alpha(\mu) \in (\Z/d_T\Z)^\times$. The set $\{m\mu \ : \ 0 \leq m < d_T\}$ forms a set of representatives for $\Z^n/(2T)\Z^n$. From the relation $T^{-1}[m\mu/2] = m^2T^{-1}[\mu/2]$, we see that $m\mu$ is primitive if and only if $(m,d_T) = 1$ if and only if $\alpha(m\mu) = m\alpha(\mu) \in (\Z/d_T\Z)^\times$. \\

    \underline{$n$ odd}: In this case, $2d_T = \det(2T)$. As before, $\alpha$ exists and now $\Z^n/(2T)\Z^n \cong \Z/2d_T\Z$. Again, there exists $\mu$ which is primitive. Here, this means now that $T^{-1}[\mu/2]$ has denominator $4d_T$. One follows the exact argument above to complete the proof.

\end{proof}

The next Proposition \ref{twisted EZ} coincides with the work of Ramakrishnan and Manickam in \cite{RM} when the chosen character $\epsilon$ is trivial. However, as noted in \cite{BA}, many of the proofs are omitted and it is known that the generalized Maaß lift used in their paper is incorrect. Hence, we include the proof of the case when $\varepsilon$ is trivial.

\begin{proposition} \label{twisted EZ}
    Let $\phi \in J_{k,T}(\Gamma_0(N),\chi)$ be a nonzero Jacobi form of fundamental index $T$. Assume $(d_T,N) = 1$ and that $N$ is odd and square-free. Consider the theta expansion of $\phi:$

    \[
    \phi(\tau,z) = \sum_{\mu \in \Z^n/2T\Z^n} h_\mu(\tau) \Theta_\mu(\tau,z)
    \]

    Fix a $\Z$-module isomorphism $\alpha:\Z^n/(2T)\Z^n \to \Z/d'\Z$, where $d'$ is as in Lemma \ref{primitivity lemma}. For a Dirichlet character $\epsilon$ modulo $d'$, extend the definition of $\epsilon$ to $\Z^n/(2T)\Z^n$ via $\alpha$, i.e. $\epsilon(\mu):=\epsilon(\alpha(\mu))$. Define the \textit{twisted} Eichler--Zagier map $h_\epsilon$ as follows: 

    \[
    h_\epsilon(\tau) := \sum_{\mu \in \Z^n/(2T)\Z^n} \epsilon(\mu) h_\mu(\mathrm{lvl}(T) \cdot \tau)
    \]

    Then $h_\epsilon \in M_{k-n/2}(\Gamma_0(\mathrm{lvl}(T)^2N), \chi \cdot \omega)$, where $\omega$ is a Dirichlet character modulo $\mathrm{lvl(T)}$, given below:
    
    \[\omega(d) = 
    \begin{cases}
        (\epsilon\cdot\chi_T^{-1})(d)\cdot\left(\dfrac{-1}{d} \right)^{(2k-(n+1))/2} & n \ \mathrm{odd} \\ 
        (\epsilon\cdot\chi_T^{-1})(d) & n \ \mathrm{even}
    \end{cases}
    \]
    
    Here, $\chi_T$ is a certain Dirichlet character modulo $\mathrm{lvl}(T)$ (See \cite{Ajouz}).
\end{proposition}

\begin{proof}
    When $g = \begin{pmatrix} a & b \\ c & d \end{pmatrix}\in \Gamma_0(\text{lvl}(T))$, we have the following transformation formula for $\Theta_\mu$ (Theorem 2.3.4 in \cite{Ajouz}):

\[
\begin{cases}
    \Theta_{d\mu}\vert_{n/2, T}g = \left(\frac{c}{d}\right) \epsilon_d^{-1} \chi_T(d)\exp(bdT^{-1}[\mu/2]) \Theta_\mu(\tau,z) & n \ \text{odd} \\ 
    \Theta_{d\mu}\vert_{n/2, T}g = \chi_T(d)\exp(bdT^{-1}[\mu/2]) \Theta_\mu(\tau,z) & n \ \text{even}
\end{cases}
\]

For $g \in \Gamma_0(N)$, we have 

\[
\phi \vert_{k,T} g = \chi(d) \phi(\tau,z)
\]

Together, we find that for $g \in \Gamma_0(\text{lvl}(T) \cdot N)$, 

\begin{equation}
\begin{cases} \label{theta component law}
    h_{d\mu}(g \langle \tau \rangle) = \chi(d)\chi_T^{-1}(d) \left(\frac{c}{d}\right) \epsilon_d^{-1}(d) (c \tau + d)^{k-n/2} \exp(-bdT^{-1}[\mu/2]) h_\mu(\tau) & n \ \text{odd} \\ 
    h_{d\mu}(g \langle \tau \rangle) = \chi(d)\chi_T^{-1}(d) (c \tau + d)^{k-n/2} \exp(-bdT^{-1}[\mu/2]) h_\mu(\tau) & n \ \text{even}
\end{cases}
\end{equation}

Summing the theta components together, we acquire the function equation below for every $g \in \Gamma_0(\text{lvl}(T)N) \cap \Gamma^0(\text{lvl}(T))$:

\[
\begin{cases}
    \sum_\mu \varepsilon(d\mu)h_{d\mu}(g \langle \tau \rangle) = \varepsilon(d)\chi(d) \chi_T^{-1}(d) \left(\frac{c}{d}\right) \epsilon_d^{-1} (c\tau+d)^{k-n/2} \sum_\mu \varepsilon(\mu)h_\mu(\tau) & n \ \text{odd} \\ 
    \sum_\mu \varepsilon(d\mu)h_{d\mu}(g \langle \tau \rangle) = \varepsilon(d)\chi(d) \chi_T^{-1}(d) (c\tau+d)^{k-n/2} \sum_\mu \varepsilon(\mu)h_\mu(\tau) & n \ \text{even}
\end{cases}
\]

In conclusion, we have shown that $h_\varepsilon(\tau/\text{lvl}(T)) = \sum_\mu \varepsilon(\mu)h_\mu(\tau)$ is modular with respect to $\Gamma_0(\text{lvl}(T)N) \cap \Gamma^0(\text{lvl}(T))$, having weight $k-n/2$ and nebentypus $\chi \cdot \omega$. Here, $\Gamma^0(M)$ is the subgroup of matrices which are lower triangular modulo $M$. Hence, $h_\varepsilon$ is modular with respect to $\Gamma_0(\text{lvl}(T)^2N)$. \\

\end{proof}

In general, we expect that a large proportion of the twists, $h_\epsilon$, will vanish. Our next proposition ensures that at least one of the twists does not vanish. 

\begin{proposition} \label{nonvanishing of twist}
    Let $\phi \in J_{k,T}(\Gamma_0(N),\chi)$ be a nonzero Jaccobi form of fundamental index $T$. Assume that $(d_T,N) = 1$ and that $N$ is odd and square-free. Then there exists a Dirichlet character $\epsilon$ modulo $d'$ so that the corresponding twisted Eichler--Zagier map $h_\epsilon$ does not vanish. Here, $d' = d_T$ or $2d_T$ just as in Lemma \ref{primitivity lemma}.
\end{proposition}

\begin{proof}
    Suppose $h_\epsilon = 0$ for every $\epsilon$. This is equivalent to the vanishing of the following expression:

    \[
    \left( \epsilon(\mu) \right)_{\epsilon,\mu} \cdot (h_\mu)_\mu = A(d') \cdot (h_\mu)_\mu
    \]

    Here, the matrix $A(d')$ has rows indexed by Dirichlet characters $\epsilon$ modulo $d'$ and columns indexed by primitive components $\mu \in \Z^n/(2T)\Z^n$. By Lemma \ref{primitivity lemma}, there are $\phi(d')$ columns, which is equal to the number of rows. So $A(d')$ is a square matrix. From Theorem $\ref{3.3}$, the column $(h_\mu)_\mu$ does not vanish. Hence, we will have reached a contradiction if $A(d')$ is invertible. We prove this statement now. \\

    We induct on the number of prime factors of $d'$. When $d' = 1$ or $2$, $A(d')$ is simply the number $1$ and this is obvious. Suppose that $d' = p$ is an odd prime. Then, up to rearranging rows and columns, $A(p)$ takes the following form: 

    \[
    A(p) = \begin{pmatrix}
        1 & 1 & 1 & \cdots & 1 \\
        1 & \zeta_{p-1} & \zeta_{p-1}^2 & \cdots & \zeta_{p-1}^{p-2} \\ 1 & \zeta_{p-1}^2 & (\zeta_{p-1}^2)^2 & \cdots & (\zeta_{p-1}^2)^{p-2} \\ \vdots & \vdots & \ddots & \cdots & \vdots\\ 1 & \zeta_{p-1}^{p-2} & (\zeta_{p-1}^{p-2})^2 & \cdots & (\zeta_{p-1}^{p-2})^{p-2}
    \end{pmatrix}
    \]

    This follows by noting that $(\Z/p\Z)^\times$ is cyclic of order $p-1$. So $A(p)$ is a Vandermonde matrix with determinant $\displaystyle \prod_{0 \leq i < j \leq p-2} (\zeta_{p-1}^i - \zeta_{p-1}^j)$. Of course, this is nonzero and so $A(p)$ is invertible. \\

    In general, let $d' = p_1 \cdots p_t$ be the prime factorization of $d'$. By the Chinese Remainder Theorem, we have $A(d') = A(d'/p_t) \otimes A(p_t)$. By induction on $t$ along with the well-known rank identity $\text{rank}(A \otimes B) = \text{rank}(A) \cdot \text{rank}(B)$, $A(d')$ is invertible and the proof is complete. 
\end{proof}

\begin{remark} \label{seeing the level}
    From the proof of Proposition \ref{twisted EZ}, Equation (\ref{theta component law}), one sees that the theta components alone are modular for the group $\Gamma(\text{lvl}(T)N)$. Via the Eichler--Zagier map, their modularity is improved to $\Gamma_0(\text{lvl}^2(T)N)$ \textit{with nebentypus involving} $\chi$. The inherited nebentypus allows us to control where the resulting elliptic cusp form $h_\varepsilon$ lands. Importantly, it allows us to ensure that $h_\varepsilon$ is a \textit{newform}. \\

\end{remark}

\begin{remark}
    When defining the Eichler--Zagier maps in Proposition \ref{twisted EZ}, note that we specifically chose the twist $\epsilon$ to be a character modulo $d'$. One can in fact choose characters modulo divisors of $d'$. Say $e$ is such a divisor and $\epsilon$ is a Dirichlet character modulo $e$. Then following a similar argument appearing in \cite{Kramer}, the corresponding Eichler--Zagier twist $h_\epsilon$ in fact has improved level equal to $M\cdot e/d'$, where $M$ is the level appearing in Proposition \ref{twisted EZ}. We note however that despite the improved level, the sum defining $h_\epsilon$ now potentially includes \textit{imprimitive} theta components, whose Fourier expansions are \textit{not} necessarily supported away from $d'$. This becomes important for the proof of Theorems \ref{3.2} and \ref{3.3}. Proposition $\ref{nonvanishing of twist}$ also only applies to Dirichlet characters modulo $d'$. For these reasons, we have omitted the proof of the aforementioned statements for characters of smaller modulus in Proposition \ref{twisted EZ}. 
\end{remark}

\section{Proof of Theorem \ref{3.1}} \label{proof of thm 3.1}

With the preparations of Section \ref{props and lemmas}, we are now ready to prove Theorems \ref{3.1} and \ref{3.2}. We recall the statement of Theorem \ref{3.1} for convenience: \\

{\bf Theorem \ref{3.1}}:
    Let $F \in S_\rho(\Gamma_0^n(N), \chi)$ be a nonzero Siegel cusp form of odd and square-free level $N$ and genus $3$. Assume $k(\rho) - n/2 \geq 2$. Further assume the following conditions on $\chi$:

\begin{enumerate}

\item $\mathfrak{f}_\chi = N$. 

\item For each $p \vert N$, $p$ divides the conductor of $\chi \epsilon_p$, where $\epsilon_p$ is the natural quadratic character attached to the number field $\Q(\sqrt{p})$. 

\end{enumerate}

Then there exists a fundamental $T \in \Lambda_n^+$ so that $a_F(T) \neq 0$ and $(d_T,N) = 1$. \\

\begin{proof}

The argument is essentially the one appearing in \cite{BD}, except we use the twisted Eichler--Zagier map to ensure we can say something with respect to the level. \\

Assume $n = 1$. Up to a change of basis, we may assume that $F$ is valued in $\C^m$ and that the components of $F = (F_1,...,F_m)$ are themselves modular forms of weight $k_1,...,k_m$. So it suffices to prove the theorem statement for scalar-valued cusp forms. Because the nebentypus $\chi$ is primitive, $F$ is a newform. Hence, $F$ has a nonzero Fourier coefficient $a_F(\ell) \neq 0$ with $(\ell, N) = 1$ (see Lemma $2$ in \cite{Yamana}). By Lemma \ref{4.2}, $F$ has a nonzero Fourier coefficient $a_F(\ell')$ with $\ell'$ odd, square-free, and coprime to $N$, completing the base case of genus $n = 1$. \\

Now assume $n \geq 2$. For $Z \in \mathbb{H}_n$, write $Z = \begin{pmatrix} \tau & z \\ ^t z & \mathfrak{Z} \end{pmatrix}$ as before. Taylor expand $F$ around $z = 0$:

\[
F(\tau, z, \mathfrak{Z}) = \sum_{\lambda \in \N^{n-1}} F_\lambda(\tau, \mathfrak{Z}) z^\lambda.
\]

Let $\nu_0$ be the minimal integer so that there exists $\lambda$ with $\nu(\lambda) = \nu_0$ and $F_{\lambda} \neq 0$. Define a vector-valued function $F^{\circ}:\h_1 \times \h_{n-1} \to V \otimes_\C \C[x_2,...,x_n]_{\nu_0}$ as follows

\[
F^{\circ}(\tau,\mathfrak{Z}) := \sum_{\lambda: \nu(\lambda) = \nu_0} F_\lambda(\tau,\mathfrak{Z}) \otimes x_2^{\lambda_2} \cdots x_n^{\lambda_n}
\]

Consider the embedding 

\[
(\cdot)^{\downarrow}:\text{Sp}_{2n-2}(\Z) \hookrightarrow \text{Sp}_{2n}(\Z), g = \begin{pmatrix} a & b \\ c & d \end{pmatrix} \mapsto \begin{pmatrix} 1 & & 0 & \\ & a & & b \\ 0 & & 1 & \\ & c & & d \end{pmatrix} = g^\downarrow.
\]

{\bf Proposition (\cite{BD} Prop 3.1):} We have the following transformation law for $F^\circ$:

\[
(F \vert_\rho g^{\downarrow})^\circ (\tau,z) = \rho \begin{pmatrix} 1 & 0 \\ 0 & cz+d \end{pmatrix}^{-1} \otimes \text{Sym}^{\nu_0}(cz+d)^{-1} F^\circ (\tau, g \langle z \rangle). 
\]

Here, $\text{Sym}^{\nu_0}$ is the symmetric power representation of $\text{GL}_{n-1}(\mathbb{C})$ on $\mathbb{C}[x_2,...,x_n]_{\nu_0}$ given by $(A \cdot f)(x) := f(xA)$. \\

Let $\rho'$ denote the restriction of $\rho$ to the embedding 

\[
\text{GL}_{n-1} \hookrightarrow \text{GL}_{n}, g \mapsto \begin{pmatrix} 1 & 0 \\ 0 & g \end{pmatrix}.
\]

Then, restated, the proposition simply says that 

\[
(F \vert_\rho g^{\downarrow})^\circ (\tau, z) = (F^\circ \vert_{\rho' \otimes \text{Sym}^{\nu_0}} g)(\tau, z)
\]

When $g \in \Gamma_0^{n-1}(N)$, we have by the assumption on $F$ that $F \vert_\rho g^{\downarrow} = \chi(\det(d)) F$. So together with the proposition, we have that for every fixed $\tau$, 

\[
F^\circ \vert_{\rho' \otimes \text{Sym}^{\nu_0}} g = \chi(\det(d)) F^\circ
\]

In particular, $F^\circ \in S_{\rho' \otimes \text{Sym}^{\nu_0}}^{n-1}(\Gamma_0^{2}(N)), \chi)$ is a non-zero, genus $2$ Siegel cusp form with nebentypus $\chi$. Because $k(\rho' \otimes \mathrm{Sym}^{\nu_0}) \geq k(\rho)$ and $k(\rho) - n/2 \geq 3$, we have that $k(\rho' \otimes \mathrm{Sym}^{\nu_0}) - (n-1)/2 \geq 3$. So the assumptions of Theorem ??? hold and by the induction hypothesis, $F^\circ$ has a non-zero fundamental Fourier coefficient $a_{F^\circ}(\mathfrak{T})$ with $(\disc(\mathfrak{T}),N) = 1$. \\

Consider now the Fourier--Jacobi expansion $F(Z) = \sum_{\mathfrak{T} \in \Lambda_2} \varphi_{\mathfrak{T}}(\tau, z) q^\mathfrak{T}$. A simple calculation gives that the $\mathfrak{T}$-th Fourier coefficient of $F^\circ$ is given by 

\[
a_{F^\circ}(\mathfrak{T}) = \sum_\lambda \frac{1}{\lambda!}\left(\frac{\partial^\lambda}{\partial z^\lambda}\varphi_{\mathfrak{T}}\right) \vert_{z = 0}(\tau) \otimes x_2^{\lambda_2} \cdots x_n^{\lambda_n}
\]

So $a_{F^\circ}(\mathfrak{T}) \neq 0$ implies $\varphi_\mathfrak{T} \neq 0$. Hence, there exists a nonzero Fourier--Jacobi coefficient $\varphi_{\mathfrak{T}}$ with fundamental index $\mathfrak{T}$ whose discriminant is coprime to $N$.  \\

By Proposition \ref{reduce to scalar}, $\varphi_\mathfrak{T}$ has a nonzero scalar-valued component which is a Jacobi form satisfying the same modularity conditions, with potentially higher weight $k'$. We abuse notation as usual and refer to this nonzero component as $\varphi_\mathfrak{T}$. Consider a theta component $h_\mu$ of $\varphi_\mathfrak{T}$. The Fourier expansion of $h_\mu$ is given:

\[
h_\mu(\tau) = \sum_{\ell > 0} a_F \begin{pmatrix}
    \ell & ^t\mu/2 \\ \mu/2 & \mathfrak{T} 
\end{pmatrix} \exp((\ell - \mathfrak{T}^{-1}[\mu/2])\tau)
\]

From the formula $\det \begin{pmatrix}
    \ell & ^t\mu/2 \\ ^t\mu/2 & \mathfrak{T} 
\end{pmatrix} = (\ell - \mathfrak{T}^{-1}[\mu])\det(\mathfrak{T})$, we see that an odd and square-free Fourier coefficient of $h_\mu(\mathrm{lvl}(\mathfrak{T}) \cdot \tau)$ is exactly a fundamental Fourier coefficient of $F$. Hence, we must prove that $h_\mu$ has a nonzero odd and square-free Fourier coefficient. We prove this now. \\

By Proposition \ref{nonvanishing of twist}, there exists a Dirichlet character $\epsilon$ modulo $\mathrm{lvl}(\mathfrak{T})$ so that $h_\epsilon(\tau) = \sum_\mu \epsilon(\mu) h_\mu(\mathrm{lvl}(\mathfrak{T})\tau)$ is nonzero. $h_\epsilon$ is contained in $S_{k' - (n-1)/2}(\Gamma_0(\mathrm{lvl}(\mathfrak{T})^2N), \chi \cdot \omega)$, where $\omega$ is a Dirichlet character modulo $\mathrm{lvl}(\mathfrak{T})$. Let $N = p_1 \cdots p_t$ be the prime factorization of $N$ and put $g_0 := h_\epsilon$. For $0 < i \leq t$, define $\displaystyle g_{i}(\tau) := \sum_{(n, q_i) = 1} a_{g_{i-1}}(n) q^n$.\\

Because $\epsilon$ is defined modulo $\mathrm{lvl}(\mathfrak{T})$, $h_\epsilon$ is a sum of primitive theta components. The Fourier coefficients of primitive theta components are, by construction, supported away from $\mathrm{lvl}(\mathfrak{T})$. So if $g_t \neq 0$, then the proof is complete by Lemma \ref{4.2}. \\

In the case of odd genus $n$, $h_\epsilon \in S_{k-(n-1)/2} (\Gamma_0(\mathrm{lvl}(\mathfrak{T})^2N, \chi \cdot \omega)$. Note the $h_\epsilon$ has integer weight. Because the Fourier coefficients of $h_\epsilon$ are supported away from $\mathrm{lvl}(\mathfrak{T})$, we have by classical old and newform theory (see e.g. Lemma $1$ and $2$ in \cite{Yamana}) that $h_\epsilon$ has a nonzero Fourier coefficient $a_{h_\epsilon}(\ell) \neq 0$ with $(\ell, N) = 1$. Hence, $g_t$ is nonzero and we're done in this case. \\

In the case of even genus $n$, $h_\epsilon$ has half-integer weight. We proceed by induction on $i$. By construction, $g_0 \neq 0$. Suppose that $g_{i+1} \neq 0$ for some $0 \leq i < t$ and $g_{i} \neq 0$. By repeated application of Lemma \ref{4.4}, $g_i$ is modular with respect to the group $\Gamma_0(\mathrm{lvl}(\mathfrak{T})^2N(q_1 \cdots q_i)^2)$. If $g_{i+1} = 0$, then by Lemma \ref{4.5}, $\chi \varepsilon_{q_{i+1}}$ has conductor dividing $\mathrm{lvl}(\mathfrak{T})^2N(q_1 \cdots q_i)^2/q_{i+1}$. This is impossible however because the latter expression is not divisible by $q_{i+1}$ while the conductor is. So claim is proven.

\end{proof}

In the case of genus $3$, half-integral matrices $T \in \Lambda_3^+$ correspond to orders in quaternion algebras. Corresponding to a maximal order simply means that the discriminant $d_T$ is an odd prime. See \cite{BD} for details. So to prove Theorem \ref{3.2}, one simply needs to refine the statement of Theorem \ref{3.1} to odd prime discriminants. We prove Theorem \ref{3.2} now. In fact, we shall prove a more general statement: \\

{\bf Theorem \ref{3.2} (More general version):} In the setting of Theorem \ref{3.1}, if the genus $n>1$ is odd, then there exists infinitely many odd primes $p$ and infinitely many $T \in \Lambda_n^+$ so that $a_F(T) \neq 0$ and $d_T = p$.

\begin{proof}
    By Theorem \ref{3.1}, there exists a fundamental $\mathfrak{T} \in \Lambda_{n-1}^+$ with $(d_\mathfrak{T},N) = 1$ and the corresponding Fourier--Jacobi coefficient $\varphi_\mathfrak{T} \neq 0$. By Proposition \ref{nonvanishing of twist}, there exists a Dirichlet character $\epsilon$ modulo $d_\mathfrak{T}$ so that $h_\epsilon \neq 0$. $h_\epsilon$ has integer weight (because $n$ is odd), is modular for $\Gamma_0(\mathrm{lvl}(\mathfrak{T})^2N)$ with nebentypus $\chi \cdot \omega$ ($\omega$ a character modulo $\mathrm{lvl}(\mathfrak{T})$), and has Fourier coefficients supported away from $\mathrm{lvl}(\mathfrak{T})$. So by classical old and newform theory, $h_\epsilon$ has a nonzero Fourier coefficient away from $N$. In particular, $h_\epsilon$ is not an oldform. Hence, by Lemma 5.2 in \cite{BD}, $h_\epsilon$ is determined by it's prime Fourier coefficients, completing the proof.
    
\end{proof}

\begin{remark}
    In the case of even genus, Theorem \ref{3.2} is expected (i.e. we expect prime discriminants to determine the Siegel modular form). However, the current proof would require the analogous statement for half-integer weights. This is not known currently. 
\end{remark}

\printbibliography

\end{document}